\newtheorem{thm}{Theorem}[section]
\newtheorem{cor}[thm]{Corollary}
\newtheorem{prop}[thm]{Proposition}
\newtheorem{lem}[thm]{Lemma}
\theoremstyle{definition}
\newtheorem{defin}[thm]{Definition}
\numberwithin{equation}{section}
\def\eq#1{{\rm(\ref{#1})}}
\def\Eq#1#2{\ifthenelse{\equal{#1}{*}}
  {\begin{equation*}\begin{aligned}[]#2\end{aligned}\end{equation*}}
  {\begin{equation}\begin{aligned}[]\label{#1}#2\end{aligned}\end{equation}}}
\def\A{\mathscr{A}}
\def\M{\mathscr{M}}
\def\P{\mathscr{P}}
\newcommand{\operator}[1]{\mathop{\vphantom{\sum}\mathchoice
{\vcenter{\hbox{\LARGE $#1$}}}
{\vcenter{\hbox{\Large $#1$}}}{#1}{#1}}\displaylimits}
\def\Mm{\operator{\mathscr{M}}}
\def\Ar{\operator{\mathscr{A}}}
\newcommand\R{\mathbb{R}}
\newcommand\N{\mathbb{N}}
\newcommand\Z{\mathbb{Z}}
\newcommand\Q{\mathbb{Q}}
\def\WQX{V(\Q)}
\def\WRX{V(\R)}
\newcommand{\QA}[1]{\A_{#1}}
\newcommand{\vone}{\textbf1}
\newcommand{\dotvec}[3][SKIPPED]{
\ifthenelse{\equal{#1}{SKIPPED}}
  {#2,\dots,#3}
  {\underbrace{#2,\dots,#3}_{#1\text{ entries}}}
}
\newcommand{\Hc}[2][SKIPPED]{
\ifthenelse{\equal{#1}{SKIPPED}}
  {
    \ifthenelse{\equal{#2}{}}
      {\mathscr{H}}
      {\mathscr{H}(#2)}
  }
  {
    \ifthenelse{\equal{#2}{}}
      {\mathscr{H}_{#1}}
      {\mathscr{H}_{#1}(#2)}
  }
}
\newcommand{\Est}[2][SKIPPED]{
\ifthenelse{\equal{#1}{SKIPPED}}
  {
    \ifthenelse{\equal{#2}{}}
      {\mathscr{C}}
      {\mathscr{C}(#2)}
  }
  {
    \ifthenelse{\equal{#2}{}}
      {\mathscr{C}_{#1}}
      {\mathscr{C}_{#1}(#2)}
  }
}
\title
{On Hardy type inequalities for weighted means}
\author{Zsolt P\'ales}
\thanks{[Zsolt P\'ales] ORCID: 0000-0003-2382-6035. Corresponding author. Institute of Mathematics, University of Debrecen, Pf.\ 12, 4010 Debrecen, Hungary, e-mail: pales@science.unideb.hu. 
The research was supported by the Hungarian Scientific Research Fund (OTKA) Grant K-111651 and by the EFOP-3.6.1-16-2016-00022 project. This project is co-financed by the European Union and the European Social Fund.}
\author{Pawe{\l} Pasteczka}
\thanks{[Pawe{\l} Pasteczka] ORCID: 0000-0002-8593-0025. Institute of Mathematics, Pedagogical University of Cracow,  Podchor\k{a}\.{z}ych str 2, 30-084 Cracow, Poland, e-mail: pawel.pasteczka@up.krakow.pl}
\begin{document}
\begin{abstract}
The aim of this paper is to establish weighted Hardy type inequality in a broad family of means. In other words, for a fixed vector of weights $(\lambda_n)_{n=1}^\infty$ and a weighted mean $\mathscr{M}$, we search for the smallest number $C$ such that
$$\sum_{n=1}^{\infty} \lambda_n \mathscr{M} \big((x_1,\dots,x_n),(\lambda_1,\dots,\lambda_n)\big) \le C \sum_{n=1}^{\infty} \lambda_nx_n \text{ for all admissible }x.$$

The main results provide a definite answer in the case when $\mathcal{M}$ is monotone and satisfies the weighted counterpart of the Kedlaya inequality. In particular, if $\mathcal{M}$ is symmetric, Jensen-concave, and the sequence $\big(\tfrac{\lambda_n}{\lambda_1+\cdots+\lambda_n}\big)$ is nonincreasing. In addition, it is proved that if $\mathcal{M}$ is a symmetric and monotone mean, then the biggest possible weighted Hardy constant is achieved if $\lambda$ is the constant vector.
\end{abstract}

\keywords{Weighted mean; Concave mean; Hardy inequality; Kedlaya inequality}
\subjclass[2010]{Primary: 26D15, Secondary: 39B62.}

\maketitle

\section{Introduction}
In twenties of the last century several authors, motivated by a conjecture by Hilbert, proved that 
\Eq{H}{
  \sum_{n=1}^\infty \P_p(x_1,\dots,x_n) \le C(p) \sum_{n=1}^\infty x_n,
}
for every sequences $(x_n)_{n=1}^\infty$ with positive terms, where $\P_p$ denotes the $p$-th \emph{power mean} 
(extended to the limiting cases $p=\pm\infty$) and
\Eq{*}{
C(p):=
\begin{cases} 
1 & p=-\infty, \\
(1-p)^{-1/p}&p \in (-\infty,0) \cup (0,1), \\ 
e & p=0, \\
\infty & p\in[1,\infty],
\end{cases} 
}
and this constant is sharp, i.e., it cannot be diminished. 

First result of this type with nonoptimal constant was established by Hardy in the seminal paper \cite{Har20a}. Later it was improved and extended by Landau \cite{Lan21}, Knopp \cite{Kno28}, and Carleman \cite{Car32} whose results are summarized in inequality \eq{H}. Meanwhile Copson \cite{Cop27} adopted Elliott's \cite{Ell26} proof of the Hardy inequality to show (in an equivalent form) that if $\P_p(x,\lambda)$ denotes the $p$-th \emph{$\lambda$-weighted power mean of a vector $x$}, then
\Eq{E:EllCop}{
\sum_{n=1}^{\infty} \lambda_n \P_p\big((x_1,\dots,x_n),(\lambda_1,\dots,\lambda_n)\big) \le C(p) \sum_{n=1}^{\infty} \lambda_nx_n
}
for all $p\in (0,1)$, and sequences $(x_n)_{n=1}^\infty$ and $(\lambda_n)_{n=1}^\infty$ with positive terms. More about the history of the developments related to Hardy type inequalities is sketched in catching surveys by Pe\v{c}ari\'c--Stolarsky \cite{PecSto01}, Duncan--McGregor \cite{DucMcG03}, and in a recent book of Kufner--Maligranda--Persson \cite{KufMalPer07}.

In a more general setting, for a given mean $\M \colon \bigcup_{n=1}^{\infty} I^n \to I$ (where $I$ is a real interval 
with $\inf I=0$), let $\Hc\M$ denote the smallest nonnegative extended real number, called the \emph{Hardy 
constant of $\M$}, such that
\Eq{*}{
\sum_{n=1}^\infty \M(x_1,\dots,x_n)\le \Hc\M\sum_{n=1}^\infty x_n
}
for all sequences $(x_n)_{n=1}^\infty$ belonging to $I$. If $\Hc\M$ is finite, then we say that $\M$ is 
a \emph{Hardy mean}. In this setup, a $p$-th power mean is a Hardy mean if and only if $p\in[-\infty,1)$ and 
$\Hc{\P_p}=C(p)$ for all $p\in[-\infty,\infty]$.

 For investigating the Hardy property of means, we recall several notions that have been partly introduced and used 
in the paper \cite{PalPas16}. 

Let $I\subseteq\R$ be an interval and let $\M \colon\bigcup_{n=1}^{\infty} I^n \to I$ be an arbitrary mean. We say that $\M$ is \emph{symmetric}, \emph{(strictly) increasing}, and \emph{Jensen convex (concave)} if, for all $n\in\N$, the $n$ variable restriction $\M|_{I^n}$ is a symmetric, (strictly) increasing in each of its variables, and Jensen convex (concave) on $I^n$, respectively. It is worth mentioning that means are locally bounded functions, therefore, the so-called Bernstein--Doetsch theorem \cite{BerDoe15} implies that Jensen convexity (concavity) is equivalent to ordinary convexity (concavity). If $I=\R_+$, we can analogously define the notion of homogeneity of $\M$. 
Finally, the mean $\M$ is called 
\emph{repetition invariant} if, for all $n,m\in\N$ and $(x_1,\dots,x_n)\in I^n$, the following identity is satisfied
\Eq{*}{
  \M(\underbrace{x_1,\dots,x_1}_{m\text{-times}},\dots,\underbrace{x_n,\dots,x_n}_{m\text{-times}})
   =\M(x_1,\dots,x_n).
}

Having all these definitions, let us recall the two main theorems of the paper \cite{PalPas16}.
The first result provides a lower estimation of the Hardy constant.

\begin{thm}\label{prop:PalPas16}
Let $I \subset \R_+$ be an interval with $\inf I=0$ and $\M \colon \bigcup_{n=1}^{\infty} I^n \to I$ be a mean. Then,
for all non-summable sequences $(x_n)_{n=1}^{\infty}$ in $I$,
\Eq{*}{
\Hc\M \ge \liminf_{n \to \infty} x_n^{-1} \cdot \M\left(x_1,x_2,\ldots,x_n\right).
}
In particular,
\Eq{*}{
\Hc\M \ge \sup_{y\in I} \liminf_{n \to \infty} \frac ny \cdot \M\left(\frac y1,\frac y2,\ldots,\frac yn 
\right)=:\Est\M.
}
\end{thm}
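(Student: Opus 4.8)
The plan is to establish both inequalities by a direct estimation argument. The key observation is that the Hardy constant $\Hc\M$ controls the partial-sum behaviour of $\M$ along any non-summable sequence, so I would start from the defining inequality and extract the asymptotic ratio.

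First I would prove the general lower bound. Fix a non-summable sequence $(x_n)_{n=1}^\infty$ in $I$ and set $L:=\liminf_{n\to\infty} x_n^{-1}\M(x_1,\dots,x_n)$. If $L=0$ there is nothing to prove, so assume $L>0$ and pick any $c\in(0,L)$. By definition of $\liminf$, there is an index $N$ such that $x_n^{-1}\M(x_1,\dots,x_n)\ge c$, i.e. $\M(x_1,\dots,x_n)\ge c\,x_n$, for all $n\ge N$. Summing and using the Hardy inequality gives
\Eq{*}{
  \Hc\M\sum_{n=1}^\infty x_n\ge \sum_{n=1}^\infty \M(x_1,\dots,x_n)\ge \sum_{n\ge N}c\,x_n=\infty,
}
since $(x_n)$ is non-summable. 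The only way this is consistent is if $\Hc\M=\infty$, which trivially dominates $L$; otherwise we must examine the finite case more carefully. The cleaner route, which I would actually carry out, is to compare tails directly: for every $n\ge N$ write $\M(x_1,\dots,x_n)\ge c\,x_n$ and sum only the tail against $\Hc\M$ applied to the tail sums, so that the divergent parts cancel and leave $\Hc\M\ge c$. Letting $c\uparrow L$ yields $\Hc\M\ge L$, which is the first claim.

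For the second inequality I would specialize the first one. Fix $y\in I$ and consider the sequence $x_n:=y/n$, which lies in $I$ for all $n$ (as $\inf I=0$ and $I$ is an interval containing points arbitrarily close to $0$, with $y\in I$) and is non-summable because $\sum 1/n$ diverges. Applying the already-proved general bound to this sequence gives
\Eq{*}{
  \Hc\M\ge \liminf_{n\to\infty}\Big(\tfrac{y}{n}\Big)^{-1}\M\Big(\tfrac y1,\tfrac y2,\dots,\tfrac yn\Big)
   =\liminf_{n\to\infty}\frac ny\,\M\Big(\tfrac y1,\dots,\tfrac yn\Big).
}
Taking the supremum over all admissible $y\in I$ gives $\Hc\M\ge\Est\M$, completing the proof.

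The main obstacle, and the step requiring the most care, is the first inequality in the finite-$\Hc\M$ case: one cannot simply sum $\M(x_1,\dots,x_n)\ge c\,x_n$ over all $n$, because both sides diverge and the inequality $\Hc\M\cdot\infty\ge\infty$ carries no information. The correct device is to apply the Hardy inequality to a \emph{shifted} sequence or to a sequence that agrees with $(x_n)$ only from index $N$ onward, so that the contribution of the first $N-1$ terms is finite and the comparison of divergent tails produces the constant $c$ rather than an indeterminate form. I would verify that restricting or truncating the sequence keeps it inside $I$ and non-summable, and that the mean of the truncated initial segment does not disturb the tail estimate; this bookkeeping is routine but is where the argument must be made rigorous.
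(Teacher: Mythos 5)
Your second step (specializing to $x_n=y/n$ and taking the supremum over $y$) is correct and is exactly how the second inequality is deduced from the first. The gap is in the first step. You correctly observe that summing $\M(x_1,\dots,x_n)\ge c\,x_n$ over all $n$ only yields $\infty\le\Hc\M\cdot\infty$, but the repair you propose does not work. A ``shifted'' sequence, or one that agrees with $(x_n)$ only from index $N$ onward, has a different \emph{initial} segment, and since $\M(x_1,\dots,x_n)$ depends on all of $x_1,\dots,x_n$, the estimate $\M(x_1,\dots,x_n)\ge c\,x_n$ is destroyed by such a modification. Worse, you say you would verify that the modified sequence remains \emph{non-summable}; but for any non-summable test sequence the defining inequality of $\Hc\M$ again reads $\infty\le\Hc\M\cdot\infty$ and carries no information. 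The divergence itself is the obstruction, so no comparison of ``divergent tails'' can produce the constant $c$.

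The device that actually closes the gap is to modify the \emph{tail}, not the head: keep $x_1,\dots,x_N$ and append a summable tail of total mass at most $\varepsilon$ (for instance $x_m:=\min(\varepsilon 2^{-m},\varepsilon)$ for $m>N$). The resulting sequence is summable, the means with index $n\le N$ are unchanged, and discarding the nonnegative terms with $n>N$ yields the finite Hardy inequality $\sum_{n=1}^{N}\M(x_1,\dots,x_n)\le\Hc\M\bigl(\varepsilon+\sum_{n=1}^{N}x_n\bigr)$; then let $\varepsilon\to0$. This is precisely Lemma~\ref{lem:5} of the present paper (in weighted form; see also Proposition~3.1 of \cite{PalPas16}). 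From there your constant $c$ is extracted by comparing partial sums rather than full series: for $N$ beyond your threshold, $\sum_{n=1}^{N}\M(x_1,\dots,x_n)\ge c\sum_{n=1}^{N}x_n-C_0$ for a fixed constant $C_0$, and dividing by $\sum_{n=1}^{N}x_n\to\infty$ gives $\Hc\M\ge c$; the paper packages this last step as an application of the Stolz--Ces\`aro theorem (cf.\ Lemma~\ref{lem:4}). With the finite Hardy inequality supplied, the rest of your argument goes through.
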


Under stronger assumptions for the mean $\M$, the lower estimate obtained above becomes an equality by the following 
result.

\begin{thm}\label{thm:mainPaPa}
For every increasing, symmetric, repetition invariant, and Jensen 
concave mean $\M \colon \bigcup_{n=1}^{\infty} \R_+^n \to\R_+$ the equality $\Hc\M=\Est\M$ holds.

If, in addition, $\M$ is also homogeneous, then 
\Eq{*}{
  \Hc\M= \lim_{n \to \infty} n \cdot \M\left(1,\tfrac 12,\ldots,\tfrac 1n \right),
}
in particular, this limit exists.
\end{thm}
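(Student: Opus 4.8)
The lower bound $\Hc\M\ge\Est\M$ is exactly the second assertion of Theorem~\ref{prop:PalPas16}, so the whole problem is to prove the reverse inequality $\Hc\M\le\Est\M$; the homogeneous refinement will then be read off at the end. The engine of the upper estimate is a mixed‑mean inequality of \emph{Kedlaya type}: for every symmetric, increasing, repetition invariant and Jensen concave mean one should have
\[
\tfrac1n\sum_{k=1}^n\M(x_1,\dots,x_k)\le\M\Big(\tfrac{S_1}{1},\tfrac{S_2}{2},\dots,\tfrac{S_n}{n}\Big),\qquad S_k:=x_1+\cdots+x_k,
\]
which is the extension to an arbitrary concave $\M$ of the classical Kedlaya (mixed arithmetic–geometric) inequality, the latter being the special case $\M=G$. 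I would prove this lemma first, as a self‑contained statement, and then deduce the theorem from it.

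Granting the lemma, the upper bound is short. Fix a summable sequence, put $s:=\sum_{j=1}^\infty x_j\in\R_+$ and $A_k:=S_k/k$. Since $S_k\le s$ and $\M$ is increasing, $A_k\le s/k$, whence
\[
\sum_{k=1}^n\M(x_1,\dots,x_k)\le n\,\M(A_1,\dots,A_n)\le n\,\M\Big(\tfrac s1,\tfrac s2,\dots,\tfrac sn\Big)=s\cdot\tfrac ns\,\M\Big(\tfrac s1,\dots,\tfrac sn\Big).
\]
The partial sums on the left increase to $\sum_{k=1}^\infty\M(x_1,\dots,x_k)$, so that limit is bounded by the $\liminf$ over $n$ of the right‑hand side; and by the very definition of $\Est\M$, taking $y=s$ in the supremum, that $\liminf$ is at most $s\cdot\Est\M$. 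Hence $\Hc\M\le\Est\M$, which with Theorem~\ref{prop:PalPas16} yields $\Hc\M=\Est\M$. I want to stress the pleasant point here: because the left‑hand partial sums are monotone it is the \emph{lower} limit of the right‑hand side that controls them, which is precisely what matches $\Est\M$ (a $\liminf$) rather than a $\limsup$.

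If $\M$ is in addition homogeneous then $\tfrac ny\M(\tfrac y1,\dots,\tfrac yn)=c_n$ with $c_n:=n\,\M(1,\tfrac12,\dots,\tfrac1n)$ is independent of $y$, so $\Est\M=\liminf_{n\to\infty}c_n$ and the first part already gives $\Hc\M=\liminf_n c_n$. It remains to prove that $(c_n)$ actually converges, i.e. that $\limsup_n c_n\le\Hc\M$; the cleanest way is to show that $(c_n)$ is nondecreasing, that is
\[
(n+1)\,\M\big(1,\tfrac12,\dots,\tfrac1{n+1}\big)\ge n\,\M\big(1,\tfrac12,\dots,\tfrac1n\big),
\]
after which $c_n\uparrow\lim_n c_n=\liminf_n c_n=\Hc\M$ and the displayed formula follows at once.

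The two structural inputs are where the work lies. Establishing the Kedlaya lemma is the first obstacle: one argues by induction on $n$, and the inductive passage from $n-1$ to $n$ amounts to proving $n\,\M(A_1,\dots,A_n)\ge(n-1)\,\M(A_1,\dots,A_{n-1})+\M(x_1,\dots,x_n)$ — already for $\M=G$ this is the heart of Kedlaya's theorem, and for a general concave $\M$ it must exploit concavity together with repetition invariance rather than a single application of Jensen's inequality. The monotonicity of $(c_n)$ is the second and, I expect, the harder obstacle: rescaling by homogeneity turns it into a comparison of $\M(1,\tfrac12,\dots,\tfrac1{n+1})$ with $\M(\tfrac{n}{n+1},\dots,\tfrac1{n+1})$, and after equalizing the lengths by repetition invariance one finds that the two argument vectors carry \emph{different total mass}, so neither Schur‑concavity (majorization) nor plain monotonicity settles it alone. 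A genuinely quantitative concavity argument — writing the length‑$(n+1)$ harmonic vector as a carefully chosen convex combination of rescaled copies of the length‑$n$ one and then invoking Jensen — is required, and getting the bookkeeping of that combination right is the crux of the whole proof.
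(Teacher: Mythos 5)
Your overall architecture is exactly the paper's: the lower bound is Theorem~\ref{prop:PalPas16}, and the upper bound is obtained by first establishing that $\M$ satisfies the Kedlaya inequality \eq{DKI} (the paper's Theorem~\ref{thm:sufKedM}) and then converting that inequality into $\Hc\M\le\Est\M$ (the paper's Proposition~\ref{prop:KedtoHar}, proved in the weighted setting as Theorem~\ref{thm:HardyconstantforKedlaya}(ii)). Your execution of the conversion step is correct and essentially identical to the paper's: apply \eq{DKI}, replace $S_k/k$ by $s/k$ using monotonicity, and use that a nondecreasing sequence dominated termwise by $(b_n)$ is bounded by $\liminf_n b_n$, which is exactly why the $\liminf$ in the definition of $\Est\M$ is the right quantity.

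As a proof, however, the proposal has two genuine gaps, and they are precisely the two hard ingredients. First, the Kedlaya inequality for a general symmetric, Jensen-concave, repetition-invariant mean is only announced: you sketch an induction whose inductive step you yourself describe as already being ``the heart of Kedlaya's theorem'' in the special case of the geometric mean. This is a substantive result (Theorem~\ref{thm:sufKedM}, i.e.\ Theorem~2.1 of \cite{PalPas16}), and nothing in your text establishes it. Second, for the homogeneous refinement you correctly reduce the problem to showing $\limsup_n c_n\le\liminf_n c_n$ with $c_n=n\,\M\big(1,\tfrac12,\dots,\tfrac1n\big)$, and you propose to prove that $(c_n)$ is nondecreasing --- but you then concede that neither monotonicity nor Schur-concavity settles this and that the required convex-combination bookkeeping is ``the crux'', without carrying it out. (Monotonicity plus repetition invariance do give the weaker relation $c_n\le c_{kn}$ for every $k\in\N$, since after repeating each entry of $(1,\tfrac12,\dots,\tfrac1n)$ $k$ times the $j$-th entries satisfy $kn/j\ge n/\lceil j/k\rceil$; but this alone does not yield convergence of the full sequence.) Until these two lemmas are supplied, your argument establishes only the conditional statement ``Kedlaya property plus existence of the limit imply the theorem'', which is essentially the point from which the paper itself starts when it reduces Theorem~\ref{thm:mainPaPa} to Theorem~\ref{thm:sufKedM} and Proposition~\ref{prop:KedtoHar}.
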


Upon taking $\M$ to be a power mean in the above theorem, the Hardy--Landau--Knopp--Carleman inequality \eq{H} can 
easily be deduced. For the details, see \cite{PalPas16}.


A deeper look into the paper \cite{PalPas16} shows that Theorem~\ref{thm:mainPaPa} could be split into two parts with an intermediate state of so-called Kedlaya mean. 

Mean $\M \colon \bigcup_{n=1}^\infty I^n \to I$ ($I$ is an interval) is a \emph{Kedlaya mean} if 
\Eq{DKI}{
\A\big(x_1,\M(x_1,x_2),\dots,\M(x_1,x_2,\dots,x_n)\big)
\le \M\big(x_1,\A(x_1,x_2),\dots,\A(x_1,x_2,\dots,x_n)\big) 
}
for every $n \in \N$ and $x \in I^n$. Here and throughout this paper $\A$ will stand for the standard (or weighted) arithmetic mean. 

The motivation for the above terminology came from the paper \cite{Ked94} by Kedlaya, where he proved that the 
geometric mean satisfies the inequality above, i.e., it is a Kedlaya mean. This result provided an affirmative answer to a conjecture by Holland \cite{Hol92}. A more general theorem has recently been established by the authors 

\begin{thm}[\cite{PalPas16}, Theorem 2.1] \label{thm:sufKedM}
Every symmetric, Jensen concave and repetition invariant mean is a Kedlaya mean.
\end{thm}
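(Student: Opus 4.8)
The plan is to derive the Kedlaya inequality \eq{DKI} by first bringing both of its sides to a common arity via repetition invariance, and then invoking concavity in two different guises: the ordinary (Jensen) inequality and its majorization counterpart. Fix $n\in\N$ and $x\in I^n$, and abbreviate $A_k:=\A(x_1,\dots,x_k)$ and $M_k:=\M(x_1,\dots,x_k)$, so that, since $A_1=M_1=x_1$, the assertion \eq{DKI} reads $\frac1n\sum_{k=1}^n M_k\le \M(A_1,\dots,A_n)$. Put $N:=\mathrm{lcm}(1,\dots,n)$. By repetition invariance I would represent, for each $k$, the value $M_k=\M(u^{(k)})$, where $u^{(k)}\in I^N$ contains every $x_m$ with $m\le k$ exactly $N/k$ times; similarly $\M(A_1,\dots,A_n)=\M(w)$, where $w\in I^N$ contains every $A_j$ exactly $N/n$ times. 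Because $\M$ is symmetric, the internal order of the entries of each $u^{(k)}$ and of $w$ is irrelevant, so I am free to fix convenient orderings afterwards; note also that all these vectors have entries in the interval $I$.

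Next I would exploit the two faces of concavity. By the Bernstein--Doetsch theorem the Jensen concave restriction $\M|_{I^N}$ is genuinely concave, so for any fixed orderings of the $u^{(k)}$ Jensen's inequality yields $\frac1n\sum_{k=1}^n M_k=\frac1n\sum_{k=1}^n\M(u^{(k)})\le \M(V)$, where $V:=\frac1n\sum_{k=1}^n u^{(k)}$ is the coordinatewise average. On the other hand, symmetry together with concavity makes $\M|_{I^N}$ monotone with respect to majorization: writing a doubly stochastic matrix $P$ as a convex combination of permutation matrices (Birkhoff) and combining concavity with symmetry gives $\M(Pv)\ge\M(v)$ for every doubly stochastic $P$ and every $v\in I^N$. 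Hence, if the orderings can be chosen so that $w$ is a doubly stochastic image of $V$, say $w=PV$, then $\M(w)\ge\M(V)\ge\frac1n\sum_{k=1}^n M_k$, which is precisely \eq{DKI}.

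Everything thus reduces to a single combinatorial claim: the factors $u^{(k)}$ can be ordered so that the $N$ coordinate slots split into $n$ blocks of size $N/n$ on which $V$ has block-average exactly $A_j$. Averaging within each block is a block-diagonal doubly stochastic operation carrying $V$ to the vector equal to $A_j$ on block $j$, which is a permutation of $w$, so this supplies the required $P$. Unwinding the definitions, and after enlarging $N$ to a multiple so that all quantities are integral, the claim asks for nonnegative integers $a^{(k)}_{jm}$ (the number of copies of $x_m$ drawn from $u^{(k)}$ into block $j$), supported on $m\le\min(j,k)$ and satisfying the three marginal conditions $\sum_m a^{(k)}_{jm}=N/n$, $\sum_j a^{(k)}_{jm}=N/k$, and $\sum_k a^{(k)}_{jm}=N/j$; equivalently, block $j$ must coincide with the length-$N$ repetition representation of $A_j$, i.e.\ each $x_m$ with $m\le j$ occurring $N/j$ times. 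Summing any one family of marginals against the others shows that the three families are mutually consistent, and the harmonic supplies $\{N/k\}$ and demands $\{N/j\}$ satisfy the relevant cut inequalities precisely because $j,k\le n$. This feasibility statement is what I expect to be the heart of the matter; it is an instance of a three-way transportation problem with prescribed plane-sums, to be settled either by a max-flow/Hall type criterion (the cut conditions above being the input) or by induction on $n$, reducing the passage from $n-1$ to $n$ to the production of the new value $A_n=\tfrac{n-1}{n}A_{n-1}+\tfrac1n x_n$ and a majorization of the leftover blocks.

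Once the combinatorial claim is in hand, the proof assembles immediately: fixing the orderings it provides, the block-averaging matrix $P$ is doubly stochastic and $w=PV$, whence $\frac1n\sum_{k=1}^n M_k\le\M(V)\le\M(w)=\M(A_1,\dots,A_n)$, which is exactly \eq{DKI}. The two concavity steps and the repetition-invariance reductions are routine; the genuine obstacle is the transportation feasibility, i.e.\ the existence of the orderings making $w$ a doubly stochastic image of the coordinatewise average.
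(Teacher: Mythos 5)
Your overall architecture is sound and is essentially that of the proof in \cite{PalPas16} (which in turn generalizes Kedlaya's original argument for the geometric mean): reduce both sides of \eq{DKI} to a common arity $N=\mathrm{lcm}(1,\dots,n)$ by repetition invariance and symmetry, apply Jensen concavity once across the $n$ representatives $u^{(k)}$ to get $\tfrac1n\sum_k M_k\le\M(V)$, and once more, in the guise of Schur-concavity under doubly stochastic maps (Birkhoff plus symmetry plus concavity), to pass from $V$ to the block-averaged vector, which is a permutation of $w$. Both concavity steps are correct as stated (and the appeal to Bernstein--Doetsch to upgrade Jensen concavity to genuine concavity is exactly what the paper itself notes), and your reduction of the whole theorem to the existence of the nonnegative integer array $a^{(k)}_{j,m}$ with the three families of line sums is accurate.

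The gap is that this existence claim is not a routine afterthought but the combinatorial heart of the theorem --- it is precisely Kedlaya's arrangement lemma --- and the justifications you sketch do not establish it. What you have written down is a three-index transportation problem with prescribed \emph{line} sums (a so-called planar $3$-way transportation problem). For such systems, consistency of the margins is necessary but far from sufficient for feasibility, even over the reals, and there is no Hall/max-flow cut criterion available: deciding feasibility of planar $3$-way transportation polytopes is in general hard, in sharp contrast with the two-index case where max-flow/min-cut settles everything. So the sentence asserting that the harmonic supplies $\{N/k\}$ and demands $\{N/j\}$ ``satisfy the relevant cut inequalities'' names the statement to be proved rather than proving it. The claim is in fact true: Kedlaya \cite{Ked94} establishes it by an explicit recursive construction of the $N\times n$ array (producing the arrangement for $n$ from the one for $n-1$), and the proof of the present theorem in \cite{PalPas16} runs essentially your two concavity steps on top of that construction. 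As it stands, your argument has a hole exactly where the real work lies; to close it you must either reproduce such a recursive construction or give an honest inductive proof of the transportation feasibility.
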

Moreover, in the proof of Theorem~\ref{thm:mainPaPa} the main tool was the following (nowhere explicitly formulated) statement

\begin{prop}\label{prop:KedtoHar}
For every monotone Kedlaya mean $\M\colon \bigcup_{n=1}^{\infty} I^n \to I$ (where $I$ is an interval with $\inf I=0$), the equality $\Hc\M=\Est\M$ holds.
\end{prop}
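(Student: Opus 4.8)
The inequality $\Hc\M \ge \Est\M$ is already furnished by Theorem~\ref{prop:PalPas16}, so the entire task is to establish the reverse estimate $\Hc\M \le \Est\M$, i.e.\ to verify that $\sum_{n=1}^\infty \M(x_1,\dots,x_n) \le \Est\M \cdot \sum_{n=1}^\infty x_n$ for every admissible sequence. The plan is to let the Kedlaya inequality convert an arbitrary sequence into a harmonic one, and then to read off the constant directly from the definition of $\Est\M$.

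First I would fix a sequence $(x_n)$ in $I$ and assume $S:=\sum_{n=1}^\infty x_n<\infty$, since otherwise the right-hand side is infinite and there is nothing to prove. Writing $M_k:=\M(x_1,\dots,x_k)$ and $A_k:=\A(x_1,\dots,x_k)=\tfrac1k\sum_{j=1}^k x_j$, and observing that $M_1=A_1=x_1$, the defining inequality \eqref{DKI} of a Kedlaya mean reads exactly $\A(M_1,\dots,M_n)\le\M(A_1,\dots,A_n)$, that is,
\[
\sum_{k=1}^n M_k \le n\,\M(A_1,\dots,A_n)\qquad(n\in\N).
\]

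The second step is to flatten the prefix arithmetic means using monotonicity. Since the partial sums $S_k:=x_1+\dots+x_k$ increase to $S$, we have $A_k=S_k/k\le S/k$ for every $k$, whence, by the assumed monotonicity of $\M$, $\M(A_1,\dots,A_n)\le\M(S/1,\dots,S/n)$. Substituting this and factoring out $S$ gives $\sum_{k=1}^n M_k \le S\cdot\tfrac nS\,\M(S/1,\dots,S/n)$ for all $n$. As the left-hand side increases to $\sum_{k=1}^\infty M_k$, passing to $\liminf$ in $n$ and invoking the definition of $\Est\M$ with the choice $y=S$ yields
\[
\sum_{k=1}^\infty M_k \le S\cdot\liminf_{n\to\infty}\tfrac nS\,\M(S/1,\dots,S/n)\le \Est\M\cdot S,
\]
which is precisely the Hardy inequality with constant $\Est\M$; together with Theorem~\ref{prop:PalPas16} this establishes $\Hc\M=\Est\M$.

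The one delicate point — and the step I expect to be the genuine obstacle — is the flattening $\M(A_1,\dots,A_n)\le\M(S/1,\dots,S/n)$, which tacitly requires every argument $S/k$ to lie in $I$, i.e.\ $S\in I$. This is automatic when $I$ is unbounded above (in particular for $I=\R_+$), but it may fail on a bounded interval with $S\ge\sup I$. To repair this I would replace the pure harmonic majorant by the capped one $z_k:=\min(S/k,\,A^\ast)$, where $A^\ast:=\max_k A_k$; since $A_k\to0$ and each $A_k<\sup I$, the maximum $A^\ast$ is attained and lies in $I$, so that $z_k\in I$ and $A_k\le z_k$, making $\sum_{k\le n}M_k\le n\,\M(z_1,\dots,z_n)$ legitimate. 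The remaining work — showing $\liminf_{n\to\infty} n\,\M(z_1,\dots,z_n)\le\Est\M\cdot S$ despite the finitely many capped leading entries — is where the monotone Kedlaya structure has to be exploited once more, the guiding intuition being that altering a fixed finite block of coordinates cannot affect the asymptotic constant produced by the harmonic tail.
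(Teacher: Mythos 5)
Your argument is correct and is essentially the paper's own: the Kedlaya inequality converts the partial Hardy sum into $n\,\M(A_1,\dots,A_n)$, monotonicity majorizes each $A_k$ by $S/k$, and the $\liminf$ with $y=S$ produces $\Est\M$, while the reverse inequality is quoted from Theorem~\ref{prop:PalPas16}; this is precisely the proof of part (ii) of Theorem~\ref{thm:HardyconstantforKedlaya} specialized to $\lambda=\vone$, $\Lambda_k=k$, $m_k=A_k$, $y_0=S$. The ``delicate point'' you isolate ($S/k$ possibly escaping a bounded $I$) is a genuine subtlety, but it is equally present and left unaddressed in the paper's own argument, which substitutes $y_0/\Lambda_k$ into the mean without comment; so while your capping patch is only sketched (the final claim that modifying finitely many leading entries cannot alter the asymptotic constant is asserted, not proved), this does not put your proof behind the one in the paper.
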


Obviously Theorem~\ref{thm:sufKedM} jointly with Proposition~\ref{prop:KedtoHar} imply the first part of Theorem~\ref{thm:mainPaPa}. To prove the second part, we need to show that the mentioned limit exists.

Recently an approach to weighted Kedlaya inequalities has been presented by the authors in \cite{PalPas17b}. In particular, a weighted counterpart of Theorem~\ref{thm:sufKedM} has been established (see notation of $\WQX$ and $\WRX$ below). It motivated us to look for a weighted analogue of Proposition~\ref{prop:KedtoHar}. The result obtained in this direction will be presented in Theorem~\ref{thm:HardyconstantforKedlaya}. 

\section{Weighted means}

At the moment weighted means will be introduced. First, let us underline that there is no broad agreement about the definition of weighted means. The one presented below was introduced in \cite{PalPas17b} in the process of reverse-engineering. The main idea was to cover most of the known weighted means (i.e. power, quasi-arithmetic, deviation, and quasi-deviation means) in the abstract setting. This consideration led us to introduce the following new definition.
 
\begin{defin}[Weighted means]
Let $I \subset \R$ be an arbitrary interval, $R \subset \R$ be a ring and, for 
$n\in\N$, define the set of $n$-dimensional weight vectors $W_n(R)$ by
\Eq{*}{
  W_n(R):=\{(\dotvec{\lambda_1}{\lambda_n})\in R^n\mid\lambda_1,\dots,\lambda_n\geq0,\,\lambda_1+\dots+\lambda_n>0\}.
}
\emph{A weighted mean on $I$ over $R$} or, in other words, \emph{an $R$-weighted mean on $I$} is a function 
\Eq{*}{
\M \colon \bigcup_{n=1}^{\infty} I^n \times W_n(R) \to I
}
satisfying the conditions (i)--(iv) presented below.
Elements belonging to $I$ will be called \emph{entries}; elements from $R$ -- \emph{weights}. 

\begin{enumerate}[(i)]
 \item \emph{Nullhomogeneity in the weights}: For all $n \in \N$, for all $(x,\lambda) \in I^n \times W_n(R)$, 
and $t \in R_+$,
 \Eq{*}{
   \M(x,\lambda)=\M(x,t \cdot \lambda),
 }
\item \emph{Reduction principle}: For all $n \in \N$ and for all $x \in I^n$, $\lambda,\mu \in W_n(R)$, 
\Eq{*}{
\M(x,\lambda+\mu)=\M(x\odot x,\lambda\odot\mu),
}
where $\odot$ is a \emph{shuffle operator}\footnote{This definition comes from the theory of computation. Perhaps the most famous (folk) result states that shuffling of two regular languages is again regular; see e.g. \cite{BloEsi97}.} defined as
\Eq{*}{
(\dotvec{p_1}{p_n})\odot (\dotvec{q_1}{q_n}):=(\dotvec{p_1,q_1}{p_n,q_n}).
}
\item \emph{Mean value property}: For all $n \in \N$, for all $(x,\lambda) \in I^n \times W_n(R)$
\Eq{*}{
\min(\dotvec{x_1}{x_n}) \le \M(x,\lambda)\le \max(\dotvec{x_1}{x_n}),
}
\item \emph{Elimination principle}: For all $n \in \N$, for all $(x,\lambda) \in I^n \times W_n(R)$ and for 
all $j\in\{1,\dots,n\}$ such that $\lambda_j =0$,
\Eq{*}{
\M(x,\lambda) = \M\big((x_i)_{i\in\{1,\dots,n\}\setminus\{j\}},(\lambda_i)_{i\in\{1,\dots,n\}\setminus\{j\}}\big),
}
i.e., entries with a zero weight can be omitted. 
\end{enumerate}
\end{defin}

From now on $I$ is an arbitrary interval, $R$ stands for an arbitrary subring of $\R$.
Let us introduce some natural properties of weighted means. A weighted mean $\M$ is said to be \emph{symmetric}, if for all $n \in \N$, $x \in I^n$, $\lambda \in W_n(R)$, and $\sigma \in S_n$, 
\Eq{*}{
\M(x,\lambda) =\M(x\circ\sigma,\lambda\circ\sigma).
}
We will call a weighted mean $\M$ \emph{Jensen concave} if, for all $n \in \N$, $x,y \in I^n$ and $\lambda \in W_n(R)$,
\Eq{E:JF2}{
\M \Big( \frac{x+y}2 , \lambda \Big) \ge\frac12 \big( \M(x,\lambda)+\M(y,\lambda) \big).
}

A weighted mean $\M$ is said to be \emph{continuous in the weights} if, for all $n \in \N$ and $x \in I^n$, the mapping $\lambda \mapsto \M(x,\lambda)$ is continuous on $W_n(R)$.
Finally, a weighted mean $\M$ is \emph{monotone} if, for all $n \in \N$, $x \in I^n$ and $\lambda \in W_n(R)$, the mapping $x_i \mapsto \M(x,\lambda)$ is increasing for all $i \in \{1,\dots,n\}$.

For the sake of convenience, we will use the sum-type abbreviation. If $\M$ is an $R$-weighted mean on $I$, $n\in\N$ and $(x,\lambda) \in I^n\times W_n(R)$, then we denote
\Eq{*}{
\Mm_{i=1}^n(x_i,\lambda_i):=\M\big((\dotvec{x_1}{x_n}),(\lambda_1,\dots,\lambda_n)\big).
}

Let us recall some basic properties of weighted means defined in this way.
First result binds nonweighted, repetition invariant means and $\Z$-weighted means.

\begin{thm}[\cite{PalPas17b},Theorem~2.3]\label{thm:weighted_nonweighted}
If $\M$ is a repetition invariant mean on $I$, then the formula  
\Eq{WeiQDef}{
\widetilde \M \big((\dotvec{x_1}{x_n}),(\lambda_1,\dots,\lambda_n)\big):=\M\big( \dotvec[\lambda_1]{x_1}{x_1},\dots,\dotvec[\lambda_n]{x_n}{x_n}\big)
}
defines a weighted mean on $I$ over $\Z$.

Conversely, if $\widetilde \M$ is a $\Z$-weighted 
mean on $I$, then 
\Eq{E:MMtilde}{
\M(\dotvec{x_1}{x_n}):=\widetilde \M\big((\dotvec{x_1}{x_n}),(\dotvec[n]11)\big)
}
is a repetition invariant mean on $I$. Furthermore these transformations are inverses of each other.
\end{thm}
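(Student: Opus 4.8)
The plan is to prove the two implications separately and then to verify that $\M\mapsto\widetilde\M$ and $\widetilde\M\mapsto\M$ are mutually inverse; throughout, the central object is the \emph{expansion identity} $\widetilde\M(x,\lambda)=\M\big(E(x,\lambda)\big)$, where $E(x,\lambda)$ denotes the tuple obtained by listing $x_i$ exactly $\lambda_i$ times, consecutively, for $i=1,\dots,n$. For the forward implication this identity is precisely the definition \eq{WeiQDef}, and I would check the four axioms directly. The mean value property (iii) holds because $E(x,\lambda)$ contains only those $x_i$ with $\lambda_i>0$, so its minimum and maximum lie between $\min(x_1,\dots,x_n)$ and $\max(x_1,\dots,x_n)$; the elimination principle (iv) is immediate, an entry of weight $0$ being repeated zero times; and the reduction principle (ii) follows from the literal tuple identity $(\underbrace{x_i,\dots,x_i}_{\lambda_i+\mu_i})=(\underbrace{x_i,\dots,x_i}_{\lambda_i},\underbrace{x_i,\dots,x_i}_{\mu_i})$, so that both sides of (ii) expand to the same list. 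The only axiom that genuinely uses repetition invariance is nullhomogeneity (i): scaling the weights by $t\in\N$ replaces $E(x,\lambda)$ by the list in which each of its entries is repeated $t$ times consecutively, and repetition invariance of $\M$ leaves this value unchanged.

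For the converse, let $\widetilde\M$ be a $\Z$-weighted mean and define $\M$ by \eq{E:MMtilde}; its mean value property is inherited from (iii), so everything reduces to proving repetition invariance, and for that I would establish the expansion identity $\widetilde\M(x,\lambda)=\M(E(x,\lambda))$ for all admissible $(x,\lambda)$. The engine is a \emph{localized} form of the reduction principle that splits the weight of a single coordinate into two adjacent copies: to replace $x_i$ of weight $\lambda_i=p+q$ by two adjacent copies of weights $p$ and $q$, take $\beta$ supported only on the $i$-th coordinate with value $q$ and set $\alpha=\lambda-\beta$; then (ii) rewrites $\widetilde\M(x,\lambda)=\widetilde\M(x\odot x,\alpha\odot\beta)$, and (iv) deletes all the duplicated entries that $\beta$ has endowed with weight $0$, leaving exactly the tuple with $x_i$ doubled in place and all other coordinates untouched. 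I expect this to be the main obstacle: the reduction principle as stated doubles and interleaves the \emph{whole} tuple, whereas repetition invariance concerns consecutive, per-coordinate repetitions, and the nonsymmetry of $\M$ forbids any reordering; the resolution is precisely to exploit the freedom in choosing the support of $\beta$ together with the elimination of zero weights in order to make the global principle act locally and in the correct order.

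Iterating this local splitting, coordinate by coordinate and peeling off one unit of weight at a time, I can turn each $x_i$ into $\lambda_i$ consecutive copies of weight $1$, which gives $\widetilde\M(x,\lambda)=\widetilde\M(E(x,\lambda),\vone)=\M(E(x,\lambda))$, the expansion identity. Repetition invariance then follows by applying it with $\lambda=(m,\dots,m)$ and combining with nullhomogeneity: $\M(\underbrace{x_1,\dots,x_1}_{m},\dots,\underbrace{x_n,\dots,x_n}_{m})=\M(E(x,(m,\dots,m)))=\widetilde\M(x,(m,\dots,m))=\widetilde\M(x,\vone)=\M(x_1,\dots,x_n)$.

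Finally, for the inverse statement: starting from a repetition invariant $\M$, de-weighting gives $\widetilde\M(x,\vone)=\M(E(x,\vone))=\M(x_1,\dots,x_n)$, since each entry is repeated once, so $\M$ is recovered. Starting from a $\Z$-weighted $\widetilde\M$, re-weighting the associated $\M$ produces, by \eq{WeiQDef}, the map $(x,\lambda)\mapsto\M(E(x,\lambda))$, which by the expansion identity equals $\widetilde\M(x,\lambda)$; hence the two constructions are mutual inverses. Apart from the localization of the reduction principle, every step is a routine verification.
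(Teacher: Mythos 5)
Your proof is correct. Note that the paper itself gives no argument for this statement -- it is imported verbatim from \cite{PalPas17b} (Theorem~2.3) -- so there is no in-paper proof to diverge from; your verification is the natural one, and you correctly isolate the one non-routine point, namely that the reduction principle (ii) only duplicates and interleaves the whole tuple, so that to simulate consecutive per-coordinate repetitions one must choose the second weight vector $\beta$ supported on a single coordinate and then invoke the elimination principle (iv) to discard the zero-weight duplicates, after which induction yields $\widetilde\M(x,\lambda)=\widetilde\M(E(x,\lambda),\vone)$ and both repetition invariance and the mutual-inverse claim follow.
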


Furthermore the following two easy statements were explicitly worded.

\begin{thm}\label{thm:sym}
If $\M$ is a symmetric repetition invariant mean on $I$, then the function $\widetilde\M$ defined by the formula \eq{WeiQDef} is a symmetric weighted mean on $I$ over $\Z$. 

Conversely, if $\widetilde \M$ is a symmetric $\Z$-weighted mean on $I$, then the function $\M$ defined by \eq{E:MMtilde} is a symmetric repetition invariant mean on $I$. 
\end{thm}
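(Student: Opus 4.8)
The plan is to observe that, by Theorem~\ref{thm:weighted_nonweighted}, in each of the two implications the object produced is already known to be of the correct type (a $\Z$-weighted mean, respectively a repetition invariant mean), and that the two constructions are mutually inverse. Consequently the entire content of the statement is the transfer of the symmetry property across the formulas \eq{WeiQDef} and \eq{E:MMtilde}, and the proof splits into two short symmetry verifications.

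For the first implication I would fix $n\in\N$, $x\in I^n$, $\lambda\in W_n(\Z)$ and $\sigma\in S_n$, and show $\widetilde\M(x,\lambda)=\widetilde\M(x\circ\sigma,\lambda\circ\sigma)$. Expanding both sides by \eq{WeiQDef}, the left-hand side is the value of $\M$ on the tuple in which each entry $x_i$ is repeated $\lambda_i$ times, while the right-hand side is the value of $\M$ on the tuple in which the same blocks are listed in the order prescribed by $\sigma$. Both expanded tuples have the same length $N=\lambda_1+\dots+\lambda_n$ and consist of exactly the same blocks, so the second is obtained from the first by a permutation in $S_N$, namely the block permutation induced by $\sigma$. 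Since $\M$ is symmetric, hence invariant under every permutation of its $N$ arguments, the two values coincide, which is precisely the required symmetry of $\widetilde\M$.

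For the converse implication I would assume $\widetilde\M$ is a symmetric $\Z$-weighted mean and let $\M$ be defined by \eq{E:MMtilde}. Fixing $n\in\N$, $x\in I^n$ and $\sigma\in S_n$, I must show $\M(x)=\M(x\circ\sigma)$, that is $\widetilde\M\big(x,(\dotvec[n]11)\big)=\widetilde\M\big(x\circ\sigma,(\dotvec[n]11)\big)$. The key observation is that the constant weight vector is fixed by every permutation, so $(\dotvec[n]11)\circ\sigma=(\dotvec[n]11)$. Applying the symmetry of $\widetilde\M$ to the pair $\big(x,(\dotvec[n]11)\big)$ then gives $\widetilde\M\big(x,(\dotvec[n]11)\big)=\widetilde\M\big(x\circ\sigma,(\dotvec[n]11)\circ\sigma\big)=\widetilde\M\big(x\circ\sigma,(\dotvec[n]11)\big)$, which is exactly the desired identity.

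There is no genuine analytic difficulty here; the only point requiring care is the combinatorial bookkeeping in the first implication, where one must argue that reordering the constant blocks is realized by a single permutation of the full expanded $N$-tuple, so that the symmetry of $\M$ in all $N$ of its arguments may be invoked (entries with weight zero give empty blocks and contribute to neither side, so they cause no trouble). The converse direction is immediate once one notices that a constant weight vector is invariant under every permutation of its coordinates.
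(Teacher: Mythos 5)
Your proof is correct: since Theorem~\ref{thm:weighted_nonweighted} already guarantees that the two constructions produce objects of the right type, the only thing to verify is the transfer of symmetry, and your block-permutation argument for \eq{WeiQDef} together with the observation that the constant weight vector $(\dotvec[n]11)$ is fixed by every $\sigma\in S_n$ does exactly that. The paper itself states this result without proof (importing it from \cite{PalPas17b}), and your argument is the natural, complete one, including the correct handling of zero weights via empty blocks.
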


\begin{thm}\label{thm:conc}
If $\M$ is a Jensen concave repetition invariant mean on $I$, then the function $\widetilde\M$ defined by the formula \eq{WeiQDef} is a Jensen concave weighted mean on $I$ over $\Z$. 

Conversely, if $\widetilde \M$ is a Jensen concave $\Z$-weighted mean on $I$, then the function $\M$ defined by \eq{E:MMtilde} is a Jensen concave repetition invariant mean on $I$. 
\end{thm}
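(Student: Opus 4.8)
The plan is to handle the two implications separately; in both, the essential observation is that coordinatewise midpointing commutes with the repetition operation in \eq{WeiQDef}, so that the weighted and the unweighted Jensen concavity inequalities coincide up to notation.

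For the forward direction I would assume that $\M$ is Jensen concave and repetition invariant, so that $\widetilde\M$ is already known to be a $\Z$-weighted mean by Theorem~\ref{thm:weighted_nonweighted}. Fixing $n\in\N$, $x,y\in I^n$ and $\lambda\in W_n(\Z)$, I would set $N:=\lambda_1+\dots+\lambda_n\ge1$ and write $X,Y\in I^N$ for the expanded vectors obtained by repeating each $x_i$ (resp.\ $y_i$) exactly $\lambda_i$ times, as on the right-hand side of \eq{WeiQDef}. The key point is that the expansion of $\tfrac{x+y}2$ equals $\tfrac{X+Y}2$, so \eq{WeiQDef} gives
\Eq{*}{
\widetilde\M\Big(\tfrac{x+y}2,\lambda\Big)=\M\Big(\tfrac{X+Y}2\Big),\qquad
\widetilde\M(x,\lambda)=\M(X),\qquad
\widetilde\M(y,\lambda)=\M(Y).
}
Then applying the Jensen concavity of $\M$ to the $N$-variable vectors $X,Y$ yields exactly inequality \eq{E:JF2} for $\widetilde\M$. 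Note that repetition invariance enters only to guarantee that $\widetilde\M$ is a weighted mean; the concavity estimate itself is an immediate transcription of the concavity of $\M$.

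For the converse I would assume that $\widetilde\M$ is a Jensen concave $\Z$-weighted mean, so that the $\M$ produced by \eq{E:MMtilde} is a repetition invariant mean, again by Theorem~\ref{thm:weighted_nonweighted}. Here I would fix $n\in\N$ and $x,y\in I^n$ and simply apply \eq{E:JF2} for $\widetilde\M$ with the constant weight $(\dotvec[n]11)$. Identifying each of the three terms with the corresponding value of $\M$ via \eq{E:MMtilde}, the inequality collapses to $\M(\tfrac{x+y}2)\ge\tfrac12(\M(x)+\M(y))$, which is the Jensen concavity of $\M$.

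I do not expect any genuine obstacle here. The single thing that must be verified is the compatibility of coordinatewise midpointing with the expansion in \eq{WeiQDef}, after which each direction reduces to one substitution. In particular, no symmetry, monotonicity, or continuity of the means is needed beyond what Theorem~\ref{thm:weighted_nonweighted} already supplies.
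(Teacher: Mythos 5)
Your argument is correct. The paper actually states Theorem~\ref{thm:conc} without proof (it is quoted from \cite{PalPas17b}), and your reduction --- observing that coordinatewise midpointing commutes with the repetition expansion in \eq{WeiQDef}, so the weighted inequality \eq{E:JF2} for $\widetilde\M$ is literally the $N$-variable Jensen concavity of $\M$ with $N=\lambda_1+\cdots+\lambda_n$, and conversely specializing \eq{E:JF2} to the weight vector $(\dotvec[n]11)$ --- is exactly the intended one-line verification.
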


We will also need some extension theorem from paper \cite{PalPas17b}.

\begin{thm}
\label{thm:fieldex}
Let $I$ be an interval, $R \subset \R$ be a ring, $\M$ be a weighted mean defined on $I$ over $R$.
Then there exists a unique mean $\widetilde\M$ defined on $I$ over $R^*$ (which denotes the quotient field of $R$) such that 
\Eq{*}{
\widetilde \M \vert_{\bigcup_{n=1}^{\infty} I^n \times W_n(R)} =\M.
}
Moreover if $\M$ is symmetric/monotone then so is $\widetilde\M$.
\end{thm}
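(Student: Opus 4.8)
The plan is to \emph{clear denominators}. Since every weight $\lambda\in W_n(R^*)$ consists of nonnegative elements of the field of fractions $R^*$, which embeds in $\R$, one can multiply $\lambda$ by a suitable positive element $q\in R_+$ to return to $W_n(R)$, and then invoke nullhomogeneity (axiom (i)) both to \emph{define} and to \emph{force} the value of the extension. Concretely, writing $\lambda_i=a_i/b_i$ with $a_i,b_i\in R$, the element $q:=\prod_i b_i^2$ lies in $R_+$ and satisfies $q\lambda\in W_n(R)$. I would begin with uniqueness, since it already dictates the construction: if $\widetilde\M$ is any $R^*$-weighted extension of $\M$, then nullhomogeneity of $\widetilde\M$ together with the agreement $\widetilde\M\vert_{W_n(R)}=\M$ yields
\Eq{*}{
\widetilde\M(x,\lambda)=\widetilde\M(x,q\lambda)=\M(x,q\lambda),
}
so the value is completely determined.

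For existence I would \emph{define} $\widetilde\M(x,\lambda):=\M(x,q\lambda)$ with $q$ as above. The first point to settle is well-definedness: if $q_1,q_2\in R_+$ both clear the denominators of $\lambda$, then applying nullhomogeneity of $\M$ twice (with $t=q_2$ to the weight $q_1\lambda$, and with $t=q_1$ to the weight $q_2\lambda$) shows that $\M(x,q_1\lambda)$ and $\M(x,q_2\lambda)$ both equal $\M(x,q_1q_2\lambda)$, hence coincide. The restriction identity is then immediate, because for $\lambda\in W_n(R)$ any $q\in R_+$ gives $\M(x,q\lambda)=\M(x,\lambda)$ by nullhomogeneity (the degenerate ring $R=\{0\}$ making $W_n(R)$ empty and the claim vacuous).

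It remains to verify axioms (i)--(iv) for $\widetilde\M$ over $R^*$, each of which reduces to the corresponding axiom for $\M$ after passing to a common denominator. Nullhomogeneity over $R^*$ follows by writing a positive $t\in R^*$ as $a/b$ with $a,b\in R_+$ and clearing $t\lambda$ with $bq$; the mean value property and the elimination principle transfer directly since they are unaffected by rescaling the weights; and symmetry and monotonicity are inherited because $q\lambda$ commutes with coordinate permutations and the construction leaves the $x$-dependence untouched. I expect the \textbf{reduction principle} to demand the most care: for $\lambda,\mu\in W_n(R^*)$ one must choose a single $q\in R_+$ that simultaneously clears both, and then exploit $q(\lambda\odot\mu)=(q\lambda)\odot(q\mu)$ and $q(\lambda+\mu)=q\lambda+q\mu$ to reduce to the reduction principle for $\M$. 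This bookkeeping around the shuffle operator, together with the well-definedness argument, is the only genuinely delicate point; everything else is routine.
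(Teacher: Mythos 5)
Your argument is correct. Note that this paper does not actually prove Theorem~\ref{thm:fieldex}; it imports it from \cite{PalPas17b}, and the proof there is exactly your denominator-clearing construction: uniqueness forced by nullhomogeneity in the weights, existence via $\widetilde\M(x,\lambda):=\M(x,q\lambda)$, with well-definedness and the axioms (the reduction principle in particular) checked by passing to a common multiplier $q\in R_+$. You have identified the same delicate points (well-definedness, compatibility of $q$ with $+$ and $\odot$), so there is nothing to add.
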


Having this we can extend means defined in Theorem~\ref{thm:weighted_nonweighted} to the field $\Q$.

Let us recall that, for $p\in\R$, the weighted power mean $\P_p\colon \bigcup_{n=1}^{\infty} \R_+^n \times W_n(\R) \to \R_+$ which is defined by
\Eq{*}{
\P_p (x,\lambda):= 
\begin{cases} 
\left(\frac{\lambda_1x_1^p+\lambda_2x_2^p+\cdots+\lambda_nx_n^p}{\lambda_1+\lambda_2+\cdots+\lambda_n} \right)^{1/p} &\quad \text{ if } p \ne 0, \\                                                          
\left(x_1^{\lambda_1}x_2^{\lambda_2} \cdots x_n^{\lambda_n} \right)^{1/(\lambda_1+\lambda_2+\cdots+\lambda_n)} &\quad \text{ if } p = 0,
\end{cases}
}
admits all properties (i)--(iv). 

In a more general setting, in the spirit of book \cite{HarLitPol34}, we can define weighted quasi-arithmetic means as follows.
If $I$ is an arbitrary interval and $f \colon I \to \R$ is continuous and monotone, then the weighted quasi-arithmetic mean $\QA{f} \colon \bigcup_{n=1}^{\infty} I^n \times W_n(\R) \to I$ is a function such that for all $n \in \N$ and a pair $x \in I^n$ with weights $\lambda \in W_n(\R)$,
\Eq{*}{
\QA{f}(x,\lambda):= f^{-1} \left( \frac{\lambda_1 f(x_1)+\lambda_2 f(x_2)+\cdots+\lambda_nf(x_n)}{\lambda_1+\lambda_2+\cdots+\lambda_n} \right).
}
This sequence of generalization could be continued to Bajraktarevi\'c means, to deviation (Dar\'oczy) means and to quasi-deviation means. Investigating these families however lies outside the scope of this paper and we just refer the reader to a series of papers by Losonczi \cite{Los70a,Los71a,Los71b,Los71c,Los73a,Los77} (for Bajraktarevi\'c means), Dar\'oczy \cite{Dar71b,Dar72b}, Dar\'oczy--Losonczi \cite{DarLos70}, Dar\'oczy--P\'ales \cite{DarPal82,DarPal83} (for deviation means) and by P\'ales \cite{Pal82a,Pal83b,Pal84a,Pal85a,Pal88a,Pal88d,Pal88e} (for deviation and quasi-deviation means).

\subsection{Weighted Kedlaya property}
Like in the paper \cite{PalPas17b}, we are going to introduce the notion of the weighted Kedlaya inequality. 

To have a weighted counterpart of the Kedlaya inequality, we have to take weight sequences $\lambda$ from $R$ with a positive first member.
Therefore, for a given ring $R$, we define
\Eq{*}{
  W^0(R)&:=\{\lambda\in R^\N\mid \lambda_1>0,\,\lambda_2,\lambda_3,\dots\geq0\}.
}

For a weight sequence $\lambda \in W^0(R)$, we say that a weighted mean $\M \colon  \bigcup_{n=1}^{\infty} I^n \times W_n(R) \to I$ satisfies the \emph{$\lambda$-weighted Kedlaya inequality}, or shortly, the \emph{$\lambda$-Kedlaya inequality} if 
\Eq{*}{
\Ar_{k=1}^n \left( \Mm_{i=1}^k (x_i,\lambda_i),\:\lambda_k\right) \le \Mm_{k=1}^n \left( \Ar_{i=1}^k (x_i,\lambda_i),\:\lambda_k\right)
  \qquad(n\in\N,\, x\in I^n).
}

In fact the nonincreasingness of the ratio sequence $\big(\tfrac{\lambda_i}{\lambda_1+\cdots+\lambda_i}\big)$ will be a key assumption for Kedlaya type inequalities, therefore, we also set
\Eq{*}{
   V(R)&:=\big\{\lambda\in W^0(R)\mid \big(\tfrac{\lambda_i}{\lambda_1+\cdots+\lambda_i}\big)_{i=1}^\infty \mbox{ is nonincreasing}\big\}.
}
In fact, in 1999 Kedlaya \cite{Ked99} proved that the geometric mean satisfies the $\lambda$-weighted Kedlaya inequality for all $\lambda \in V(\R)$. This result has been generalized recently by the authors \cite{PalPas17b} to the family of symmetric, Jensen concave means. More precisely, the following theorem has been established.
\begin{thm}
\label{thm:KedVQVR}
Every symmetric and Jensen concave $\Q$-weighted mean (resp.\ $\R$-weighted mean which is continuous in the weights) satisfies the $\lambda$-weighted Kedlaya inequality for all $\lambda \in V(\Q)$ (resp.\ $\lambda \in V(\R)$). 
\end{thm}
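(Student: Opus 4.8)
The plan is to reduce the statement, in a few standard steps, to a single majorization inequality that encodes the hypothesis $\lambda\in V(\Q)$ (resp.\ $V(\R)$), and then to settle that inequality separately.

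\emph{Reductions.} I would fix $n$ and $x\in I^n$ and note that the $n$-variable instance of the $\lambda$-Kedlaya inequality involves only $\lambda_1,\dots,\lambda_n$, the relevant part of the hypothesis being that $\big(\tfrac{\lambda_i}{\lambda_1+\cdots+\lambda_i}\big)_{i=1}^{n}$ is nonincreasing. Both sides and this monotonicity condition are invariant under scaling $(\lambda_1,\dots,\lambda_n)$ by a positive constant (nullhomogeneity in the weights); moreover a vanishing weight $\lambda_j=0$ forces, by monotonicity, $\lambda_i=0$ for all $i\ge j$, and such a tail is removed by the elimination principle. Hence I may assume $\lambda_1,\dots,\lambda_n$ are positive integers. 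For the real case I would first approximate $\lambda\in V(\R)$ by vectors in $V(\Q)$ — writing $\lambda_i=r_i\Lambda_i$ with $r_i:=\lambda_i/\Lambda_i$ nonincreasing and perturbing the $r_i$ to nearby rationals preserving monotonicity — and pass to the limit using continuity in the weights; so it suffices to treat integer weights.

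\emph{Passage to a non-weighted mean.} Put $\Lambda_k:=\lambda_1+\cdots+\lambda_k$, $N:=\Lambda_n$, and let $M_0(z_1,\dots,z_m):=\M\big((z_1,\dots,z_m),(1,\dots,1)\big)$ be the associated repetition invariant mean. By Theorem~\ref{thm:weighted_nonweighted} one has $m_k:=\Mm_{i=1}^k(x_i,\lambda_i)=M_0\big(\dotvec[\lambda_1]{x_1}{x_1},\dots,\dotvec[\lambda_k]{x_k}{x_k}\big)$, and by Theorems~\ref{thm:sym} and \ref{thm:conc} the mean $M_0$ is symmetric and Jensen concave; being a locally bounded mean, it is (by the Bernstein--Doetsch theorem) concave, hence Schur-concave. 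Writing $a_k:=\Ar_{i=1}^k(x_i,\lambda_i)$, the inequality to be proved becomes
\Eq{*}{
\frac1N\sum_{k=1}^n\lambda_k m_k\le M_0\big(\dotvec[\lambda_1]{a_1}{a_1},\dots,\dotvec[\lambda_n]{a_n}{a_n}\big).
}

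\emph{Lifting and Jensen concavity.} I would fix a common multiple $M$ of $\Lambda_1,\dots,\Lambda_n$. By repetition invariance each $m_k$ equals $M_0(V_k)$, where $V_k\in I^M$ is the tuple whose empirical distribution is $\sum_{i\le k}\tfrac{\lambda_i}{\Lambda_k}\delta_{x_i}$; by symmetry I may order the entries of each $V_k$ at will. Taking the increasing arrangement of every $V_k$ and forming the componentwise convex combination $\bar V:=\sum_{k}\tfrac{\lambda_k}{N}V_k\in I^M$ (note $\sum_k\tfrac{\lambda_k}N=1$), concavity of $M_0$ on $I^M$ yields $\tfrac1N\sum_k\lambda_k m_k=\sum_k\tfrac{\lambda_k}N M_0(V_k)\le M_0(\bar V)$. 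The right-hand side of the displayed inequality equals $M_0(U)$, where $U\in I^M$ repeats each $a_k$ exactly $\tfrac{M\lambda_k}N$ times, and a direct count gives $\sum\bar V=\sum U=\tfrac MN\sum_k\lambda_k a_k$. Therefore, by Schur-concavity of $M_0$, the theorem follows once the majorization $\bar V\succ U$ is established.

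\emph{The core: majorization under the monotonicity hypothesis.} This is the crux, and precisely where $\lambda\in V$ enters: without monotonicity of $(\lambda_i/\Lambda_i)$ the comonotone average $\bar V$ need not majorize $U$ (already for $n=3$, $\lambda=(1,1,4)$, $x=(1,0,0)$ the top partial sums of $\bar V$ fall short of those of $U$). Identifying the two tuples with their empirical distributions, $\bar V\succ U$ is equivalent to
\Eq{*}{
\int_0^1\phi\Big(\sum_{k=1}^n\tfrac{\lambda_k}N\,Q_k(s)\Big)\,ds\ \ge\ \sum_{k=1}^n\tfrac{\lambda_k}N\,\phi(a_k)\qquad\text{for every convex }\phi,
}
where $Q_k$ is the quantile function of $\sum_{i\le k}\tfrac{\lambda_i}{\Lambda_k}\delta_{x_i}$ and $a_k=\int_0^1 Q_k$ is its mean. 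I would prove this by induction on $n$, using the recursion $D_k=\tfrac{\Lambda_{k-1}}{\Lambda_k}D_{k-1}+\tfrac{\lambda_k}{\Lambda_k}\delta_{x_k}$ for the underlying distributions together with the ratio bound $\tfrac{\lambda_k}{\Lambda_k}\le\tfrac{\lambda_{k-1}}{\Lambda_{k-1}}$, which limits the mass the last block injects and keeps the comonotone sum spread out enough to dominate the averaged atoms $a_k$ in convex order. I expect the bookkeeping in this inductive comparison — transporting mass between the levels of $\bar V$ and the atoms of $U$ compatibly with the weight ratios — to be the main technical difficulty; the reductions and the Jensen/Schur-concavity steps above are routine by comparison.
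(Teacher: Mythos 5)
First, a point of reference: the paper does not prove Theorem~\ref{thm:KedVQVR} at all --- it is imported verbatim from \cite{PalPas17b} --- so there is no in-paper argument to compare yours against. Judged on its own terms, the architecture of your proposal is right and the surrounding steps are correct: the reduction to positive integer weights (the elimination of a zero tail, clearing denominators by nullhomogeneity, and the rational approximation plus continuity in the weights for the real case), the passage to the associated repetition invariant mean $M_0$ via Theorems~\ref{thm:weighted_nonweighted}, \ref{thm:sym} and \ref{thm:conc}, the Bernstein--Doetsch upgrade from Jensen concavity to concavity and hence Schur-concavity, and the reformulation of the target as $M_0(\bar V)\le M_0(U)$ all check out (including the integrality of $M\lambda_k/N$, the equality of the sums of $\bar V$ and $U$, and the direction of the Schur-concavity implication). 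Your example $\lambda=(1,1,4)$, $x=(1,0,0)$ showing that the majorization fails when $(\lambda_i/\Lambda_i)$ is not nonincreasing is also correct.

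The genuine gap is that the single step carrying all the content of the theorem --- the majorization $\bar V\succ U$ under the hypothesis $\lambda\in V(R)$ --- is stated, reformulated as a convex-order inequality, and motivated, but not proved. Your own phrasing (``I would prove this by induction on $n$ \dots\ I expect the bookkeeping \dots\ to be the main technical difficulty'') is an accurate self-assessment: this inequality is precisely the weighted analogue of Kedlaya's combinatorial lemma, and it is where essentially all of the work lies; the concavity and Schur-concavity scaffolding around it is routine. In the argument you would be reproving (\cite{PalPas17b}, following \cite{Ked99}), the corresponding step is an explicit combinatorial construction: the rows $V_1,\dots,V_n$ are \emph{rearranged} (not sorted) so that the columnwise $\lambda$-weighted averages produce exactly the multiset $U$, i.e.\ each column of the $n\times M$ array averages to some $a_j$, with $a_j$ occurring $M\lambda_j/N$ times, and the monotonicity of $(\lambda_i/\Lambda_i)$ is exactly what makes the inductive construction of that arrangement possible. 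Your sorted-rows-plus-majorization variant is a legitimate weakening of that target (the comonotone sum majorizes every sum of rearrangements, so the exact-arrangement lemma implies your inequality), but it does not make the combinatorial induction any easier, and until that induction is actually carried out the proof is incomplete.
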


In fact, we will sometimes assume that a mean is a $\lambda$-Kedlaya mean and the above theorem delivers us a sufficient condition (compare Theorem~\ref{thm:HardyconstantforKedlaya} and related Corollaries~\ref{cor:HardyconstantVQ}, \ref{cor:HardyconstantVR}).

\subsection{Weighted Hardy property}

Similarly as in \cite{PalPas16}, Kedlaya inequality lead us to the Hardy property (with an optimal constant). Nevertheless, to make advantage of weighted Kedlaya inequality in struggling with the Hardy property, we need to define its weighted counterpart. Such a definition is a natural extension of the non-weighted setup.


\begin{defin}[Weighted Hardy property]
Let $I$ be an interval with $\inf I=0$, $R \subset \R$ be a ring. For an $R$-weighted mean $\M$ on $I$ and weights $\lambda \in W^0(R)$, let $C$ be the smallest extended real number such that, for all sequences $(x_n)$ in $I$,
\Eq{*}{
\sum_{n=1}^{\infty} \lambda_n \cdot \Mm_{i=1}^n\big(x_i,\lambda_i\big) \le C \cdot \sum_{n=1}^{\infty} \lambda_nx_n.
}
We call $C$ the \emph{$\lambda$-weighted Hardy constant of $\M$} or the \emph{$\lambda$-Hardy constant of $\M$} and denote it by $\Hc[\lambda]\M$. Whenever this constant is finite, then $\M$ is called a \emph{$\lambda$-weighted Hardy mean} or simply a \emph{$\lambda$-Hardy mean}.
\end{defin}

This definition is an extension of the Hardy constant (and, consequently, the Hardy property). Indeed, if we define $\vone:=(1,1,1,\dots)$ then, by Theorem~\ref{thm:weighted_nonweighted}, the weighted mean $\widetilde\M$ with weights $\vone$ could be associated with the non-weighted mean $\M$, and (in the setting of this theorem) the following equality is valid
\Eq{*}{
\Hc[\vone]{\widetilde\M} = \Hc{\M}.
}
There appears a natural question. What is a relation between being $\lambda$-Hardy and $\vone$-Hardy. Luckily, we have a simple (in its wording) property which generalizes the result of Elliott--Copson \eq{E:EllCop} (see \cite{Cop27} and \cite{Ell26}).

\begin{thm}
\label{thm:mu1}
For every symmetric and monotone mean $\M$ on $I$ over $R$, we have
\Eq{*}{
\Hc[\vone]\M=\sup_{\lambda \in W^0(R)} \Hc[\lambda]\M.
}
\end{thm}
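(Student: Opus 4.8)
The plan is to establish the two inequalities separately. Since $\vone\in W^0(R)$, the estimate $\Hc[\vone]\M\le\sup_{\lambda\in W^0(R)}\Hc[\lambda]\M$ is immediate, so the entire content lies in the reverse bound $\Hc[\lambda]\M\le\Hc[\vone]\M$ for every $\lambda\in W^0(R)$; if $\Hc[\vone]\M=\infty$ there is nothing to prove, so assume it is finite. I would first reduce this to \emph{integer} weights. Writing $\Lambda_n:=\lambda_1+\dots+\lambda_n$ (and $\Lambda_0:=0$), all terms of the two defining series are nonnegative because $\inf I=0$; a monotone-limit argument in $N$ then shows that $\Hc[\lambda]\M$ is the supremum, over finitely supported $x$, of the ratio $\sum_{n=1}^N\lambda_n\Mm_{i=1}^n(x_i,\lambda_i)\big/\sum_{n=1}^N\lambda_nx_n$ (taking entries close to $\inf I=0$ if $0\notin I$). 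On such a finite section only $\lambda_1,\dots,\lambda_N$ occur, and the nullhomogeneity in the weights makes the ratio invariant under $\lambda\mapsto t\lambda$, so for rational weights one may clear denominators and assume $\lambda_1,\dots,\lambda_N\in\N$. Thus it suffices to treat $\lambda\in W^0(\Z)$; the irrational case is discussed at the end.

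For integer $\lambda$ I would introduce the associated non-weighted mean $\M^{\flat}(z_1,\dots,z_k):=\M\big((z_1,\dots,z_k),\vone\big)$. By the reduction principle together with nullhomogeneity, $\M^{\flat}$ is repetition invariant and, moreover, an integer-weighted value equals $\M^{\flat}$ of the corresponding repetition; $\M^{\flat}$ inherits symmetry and monotonicity from $\M$, and $\Hc[\vone]\M$ is exactly its non-weighted Hardy constant. Given a test sequence $x$, form the repetition $y$ in which each $x_i$ is listed $\lambda_i$ times, and let $y^{*}$ be its nonincreasing rearrangement (available since $\sum_i\lambda_ix_i<\infty$ forces $x_i\to0$). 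Then $\sum_m y^{*}_m=\sum_m y_m=\sum_i\lambda_ix_i$, so $y^{*}$ has the \emph{same} right-hand side as $x$. The argument now rests on two monotonicity facts for $\M^{\flat}$: first, if a finite multiset $P$ is, after sorting, dominated coordinatewise by a multiset $Q$ of equal cardinality, then $\M^{\flat}(P)\le\M^{\flat}(Q)$ (immediate from symmetry and monotonicity); second, appending a copy of a value not exceeding the current minimum does not increase $\M^{\flat}$ (proved by passing to a common repetition and invoking coordinatewise monotonicity).

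Writing $S^{*}_m:=\M^{\flat}(y^{*}_1,\dots,y^{*}_m)$, the second fact shows that $(S^{*}_m)_m$ is nonincreasing, while the first, applied to the $\Lambda_n$-element multiset $\{x_i\ (\lambda_i\text{ times}):i\le n\}$ against the $\Lambda_n$ largest entries of $y$, gives $\Mm_{i=1}^n(x_i,\lambda_i)\le S^{*}_{\Lambda_n}$. Combining these,
\Eq{*}{
\sum_{n=1}^\infty\lambda_n\Mm_{i=1}^n(x_i,\lambda_i)
 \le\sum_{n=1}^\infty\lambda_nS^{*}_{\Lambda_n}
 =\sum_{n=1}^\infty\sum_{m=\Lambda_{n-1}+1}^{\Lambda_n}S^{*}_{\Lambda_n}
 \le\sum_{m=1}^\infty S^{*}_m,
}
where the last step uses that $S^{*}_m\ge S^{*}_{\Lambda_n}$ whenever $m\le\Lambda_n$ and that the inner block contains exactly $\lambda_n=\Lambda_n-\Lambda_{n-1}$ integer indices. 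Applying the $\vone$-Hardy inequality to $y^{*}$ bounds the right-hand side by $\Hc[\vone]\M\cdot\sum_m y^{*}_m=\Hc[\vone]\M\cdot\sum_i\lambda_ix_i$; taking the supremum over $x$ yields $\Hc[\lambda]\M\le\Hc[\vone]\M$, and hence $\sup_{\lambda\in W^0(\Z)}\Hc[\lambda]\M\le\Hc[\vone]\M$, with the rational case following via the finite-section reduction of the first paragraph.

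The routine points to verify are the reduction to finitely supported sequences and the two monotonicity lemmas for $\M^{\flat}$. The genuinely delicate step, which I expect to be the main obstacle, is the passage from rational to arbitrary real weights: since $\M$ is assumed monotone only in the \emph{entries} and not continuous in the weights, a real weight vector cannot be integerized even on a finite section, and $\lambda$ cannot be freely approximated by nearby rationals. Resolving this should require either exploiting continuity in the weights (in the spirit of the $V(\R)$ versus $V(\Q)$ distinction in Theorem~\ref{thm:KedVQVR}) or recasting the telescoping estimate in a weighted, measure-type form so that the unit-weight Hardy bound can be invoked directly for real $\lambda$.
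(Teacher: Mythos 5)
Your argument for integer weights is sound: the two monotonicity facts for $\M^{\flat}$ (domination of sorted multisets, and the nonincreasingness of the prefix means $S^{*}_m$ obtained by passing to a common repetition) both check out, and the telescoping estimate $\sum_n\lambda_nS^{*}_{\Lambda_n}\le\sum_mS^{*}_m$ correctly reduces the $\lambda$-Hardy inequality to the $\vone$-Hardy inequality applied to the rearranged repetition $y^{*}$. The finite-section reduction plus nullhomogeneity then extends this to rational weights. This is in fact a discrete counterpart of what the paper does: the paper also first rearranges to a nonincreasing sequence (Lemma~\ref{lem:rearrangement}, using symmetry and monotonicity) and then exploits that prefix means of a nonincreasing configuration are nonincreasing.

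However, there is a genuine gap, and you have named it yourself: the theorem is stated for an arbitrary subring $R\subset\R$, so $\lambda$ may have irrational entries (e.g.\ $R=\R$ or $R=\Z[\sqrt2]$), and your repetition device $\M^{\flat}$ simply does not apply there; no amount of scaling by $t\in R_+$ turns an irrational weight vector into an integer one, and without continuity in the weights you cannot approximate. Your proposal therefore proves the statement only for $R\subseteq\Q$. The paper closes exactly this gap by the ``measure-type recasting'' you conjectured: it extends $\M$ to the quotient field $R^{*}$ (Theorem~\ref{thm:fieldex}), encodes the weighted prefix means as $\Mm_0^{\Lambda_n}f(x)\,dx$ for the step function $f$ with plateau lengths $\lambda_n$, proves via nullhomogeneity alone that $u\mapsto\Mm_0^{u}f(t)\,dt$ is nonincreasing for nonincreasing $f$ (Lemma~\ref{lem:decr}, by the substitution $t\mapsto\tfrac pq t$ --- no weight-continuity needed), and then majorizes $f$ by its discretization $f_j$ on the uniform grid of mesh $1/j$. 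The point is that the discretization is performed on the \emph{domain} of $f$ rather than on the weights, so the prefix means of $f_j$ are genuine $\vone$-weighted means even when the $\Lambda_n$ are irrational; comparing $\lambda_nC_j(\Lambda_n)$ with $\int_{\Lambda_{n-1}}^{\Lambda_n}C_j$ and letting $j\to\infty$ yields \eq{mu1F} with the constant $\Hc[\vone]\M$. To complete your proof you would need to supply this (or an equivalent) argument for the irrational case; as it stands, the proposal does not establish the theorem in the generality claimed.
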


Its technical and quite long proof is shifted upon the last section. As an immediate consequence we obtain
\begin{cor}
Let $\widetilde \M$ be a symmetric and monotone  on $I$ over $R$. Then the following conditions are equivalent:
\begin{enumerate}[(i)]
\item $\widetilde \M$ is a $\lambda$-Hardy mean for all $\lambda \in W^0(R)$;
\item $\widetilde \M$ is a $\vone$-Hardy mean;
\item $\M$ defined by \eq{E:MMtilde} is a Hardy mean.
\end{enumerate}
\end{cor}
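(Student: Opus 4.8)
The plan is to deduce the Corollary directly from Theorem~\ref{thm:mu1} together with the equality $\Hc[\vone]{\widetilde\M}=\Hc{\M}$ recorded just after the definition of the $\lambda$-weighted Hardy constant. I would establish the cycle of implications $(i)\Rightarrow(ii)\Rightarrow(iii)\Rightarrow(i)$. Throughout, observe that $\vone=(1,1,1,\dots)\in W^0(R)$, so the weight vector $\vone$ is one admissible choice among all $\lambda\in W^0(R)$.

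First I would prove $(i)\Rightarrow(ii)$, which is immediate: if $\widetilde\M$ is a $\lambda$-Hardy mean for \emph{all} $\lambda\in W^0(R)$, then in particular it is a $\lambda$-Hardy mean for the specific choice $\lambda=\vone$, which is exactly $(ii)$. Next, for $(ii)\Rightarrow(iii)$, I would invoke the equality $\Hc[\vone]{\widetilde\M}=\Hc{\M}$ valid in the setting of Theorem~\ref{thm:weighted_nonweighted}: here $\widetilde\M$ is symmetric and monotone, hence (by Theorems~\ref{thm:weighted_nonweighted} and~\ref{thm:sym}) corresponds to a symmetric repetition invariant mean $\M$ via \eq{E:MMtilde}. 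Since $\widetilde\M$ being a $\vone$-Hardy mean means $\Hc[\vone]{\widetilde\M}<\infty$, the equality gives $\Hc{\M}<\infty$, i.e.\ $\M$ is a Hardy mean.

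The implication $(iii)\Rightarrow(i)$ is where Theorem~\ref{thm:mu1} does the real work. Assuming $\M$ is a Hardy mean, we again use $\Hc{\M}=\Hc[\vone]{\widetilde\M}$ to get $\Hc[\vone]{\widetilde\M}<\infty$. Now Theorem~\ref{thm:mu1}, applied to the symmetric and monotone mean $\widetilde\M$, yields
\Eq{*}{
\Hc[\vone]{\widetilde\M}=\sup_{\lambda\in W^0(R)}\Hc[\lambda]{\widetilde\M}.
}
Hence for every $\lambda\in W^0(R)$ we have $\Hc[\lambda]{\widetilde\M}\le\Hc[\vone]{\widetilde\M}<\infty$, which is precisely the statement that $\widetilde\M$ is a $\lambda$-Hardy mean for all $\lambda\in W^0(R)$, namely $(i)$.

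The argument is essentially a bookkeeping exercise once the two ingredients are in place, so there is no genuine analytic obstacle here; the entire difficulty is concentrated in Theorem~\ref{thm:mu1}, whose proof is deferred to the final section. The only point requiring a little care is to make sure the hypotheses of Theorem~\ref{thm:mu1} and of the correspondence in Theorem~\ref{thm:weighted_nonweighted} are all met—namely that $\widetilde\M$ is simultaneously symmetric and monotone, so that both the supremum formula and the identity $\Hc[\vone]{\widetilde\M}=\Hc{\M}$ are available. Granting that, the equivalence of the three conditions follows immediately from the fact that a supremum over a family is finite exactly when it is attained finitely at one distinguished member $\vone$ that dominates all others.
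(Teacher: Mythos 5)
Your proposal is correct and matches the paper's approach: the paper presents this corollary as an immediate consequence of Theorem~\ref{thm:mu1} combined with the identity $\Hc[\vone]{\widetilde\M}=\Hc{\M}$ recorded just before it, which is exactly the two-ingredient cycle of implications you spell out. No gaps.
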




\section{Auxiliary Results}

In this section we prove a number of technical lemmas which will be useful in the forthcoming sections.

First, a purely analytic fact is established. This is followed by results that are directly related to weighted Hardy property. Throughout this section, let $\lambda \in W^0(\R)$ and set $\Lambda_n:=\lambda_1+\cdots+\lambda_n$ for $n\in\N$.

\begin{lem}
\label{lem:2}
The series $\sum\lambda_n$ and $\sum\lambda_n/\Lambda_n$ are equi-convergent (either both convergent or both divergent).
\end{lem}

Indeed, if $\sum_{n=1}^{\infty} \lambda_n<\infty$, then $\sum_{n=1}^{\infty} \lambda_n/\Lambda_n \le \sum_{n=1}^{\infty} \lambda_n/\Lambda_1<\infty$. 
The reversed implication is due to Abel, see \cite[p.\ 125]{Kno56}.

Now we turn to results directly related to means. The first two statements are about properties of the Hardy constant, while the last one is a sort of rearranging property of a weighted mean in a case of a nonincreasing function.

The following lemma is somehow related to the so-called Hardy sequence (cf. \cite[Prop. 3.1]{PalPas16}).
\begin{lem} 
\label{lem:5}
Let $\M$ be an $R$-weighted mean on $I$. Then, for all $n\in \N$ and $x \in I^n$,
\Eq{FinWH}{
\sum_{i=1}^n \lambda_i \cdot \Mm_{j=1}^i \big(x_j,\lambda_j\big)\le \Hc[\lambda]\M \sum_{i=1}^n \lambda_ix_i.
}
\end{lem}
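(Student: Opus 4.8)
The plan is to derive the finite inequality \eq{FinWH} from the infinite inequality built into the definition of the $\lambda$-Hardy constant, by extending the finite input $x\in I^n$ to an admissible infinite sequence whose tail is negligible on the right-hand side. First I would dispose of the case $\Hc[\lambda]\M=\infty$, in which \eq{FinWH} is trivial: either $\sum_{i=1}^n\lambda_ix_i>0$ and the right-hand side equals $+\infty$, or $\sum_{i=1}^n\lambda_ix_i=0$, in which case $\lambda_ix_i=0$ for every $i$, and the mean value property together with the elimination principle force $\Mm_{j=1}^i(x_j,\lambda_j)=0$ for each $i$, so the left-hand side vanishes as well. Hence I may assume $C:=\Hc[\lambda]\M<\infty$. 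I would then note that the smallest constant $C$ occurring in the definition is itself admissible, i.e.
\Eq{*}{
\sum_{i=1}^{\infty} \lambda_i \Mm_{j=1}^i\big(x_j,\lambda_j\big)\le C\sum_{i=1}^{\infty}\lambda_ix_i
}
holds for \emph{every} sequence $(x_i)$ in $I$; this is immediate since for each fixed sequence the condition is a non-strict inequality in $C$, so the set of admissible constants is a closed upper ray and therefore contains its infimum.

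Next I would fix $x\in I^n$ and a tolerance $\delta>0$ and construct the extension. Since $\inf I=0$, the interval $I$ contains points arbitrarily close to $0$; hence for each $i>n$ I can choose $\widehat x_i\in I$ so small that $\lambda_i\widehat x_i\le \delta\,2^{-(i-n)}$ (any choice works when $\lambda_i=0$). Putting $\widehat x_i:=x_i$ for $i\le n$ yields a sequence $\widehat x=(\widehat x_i)_{i=1}^\infty$ in $I$ with $\sum_{i=1}^{\infty}\lambda_i\widehat x_i\le \sum_{i=1}^{n}\lambda_ix_i+\delta$.

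Then I would apply the displayed infinite inequality to $\widehat x$. On the left, the first $n$ summands coincide with those of \eq{FinWH} because $\widehat x_j=x_j$ for $j\le n$, while each further summand $\lambda_i\Mm_{j=1}^i(\widehat x_j,\lambda_j)$ is nonnegative, since $\lambda_i\ge0$ and the mean value property with $\inf I=0$ gives $\M\ge0$ (indeed $I\subseteq[0,\infty)$). Discarding this nonnegative tail on the left and using the tail bound on the right gives
\Eq{*}{
\sum_{i=1}^{n}\lambda_i\Mm_{j=1}^i\big(x_j,\lambda_j\big)
&\le \sum_{i=1}^{\infty}\lambda_i\Mm_{j=1}^i\big(\widehat x_j,\lambda_j\big)\\
&\le C\Big(\sum_{i=1}^{n}\lambda_ix_i+\delta\Big).
}
Letting $\delta\to0^{+}$ yields \eq{FinWH}.

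The only genuinely delicate point is that one must not attempt to control the full infinite left-hand side of the extended sequence: for a mean such as the maximum it can diverge even though the associated Hardy constant is infinite anyway. The argument sidesteps this by exploiting \emph{only} the nonnegativity of the left tail and invoking the infinite Hardy inequality as a black box, while the hypothesis $\inf I=0$ does the essential work of making the right tail arbitrarily small without leaving $I$.
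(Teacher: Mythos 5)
Your proof is correct and takes essentially the same route as the paper's: extend $x$ by a tail in $I$ whose weighted sum is at most a prescribed tolerance (the paper chooses $x_m:=\min\big(\varepsilon/(\lambda_m 2^m),\varepsilon\big)$ for $m>n$), apply the defining infinite inequality, discard the nonnegative left-hand tail, and let the tolerance tend to $0$. Your explicit treatment of the case $\Hc[\lambda]\M=\infty$ and of the attainability of the infimum defining $\Hc[\lambda]\M$ is a harmless refinement of details the paper leaves implicit.
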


\begin{proof}
Take $\varepsilon \in I$ and $x_m:=\min \big(\varepsilon/(\lambda_m 2^m),\varepsilon \big)$ for $m>n$. Then we have
\Eq{*}{
\sum_{i=1}^n \lambda_i \cdot \Mm_{j=1}^i \big(x_j,\lambda_j\big) 
&\le \sum_{i=1}^\infty \lambda_i \cdot \Mm_{j=1}^i \big(x_j,\lambda_j\big) 
\le \Hc[\lambda]\M \sum_{i=1}^\infty \lambda_ix_i\\
&\le \Hc[\lambda]\M \left(\sum_{i=1}^n \lambda_ix_i+\sum_{i=n+1}^\infty \frac{\varepsilon}{2^i}\right) 
\le \Hc[\lambda]\M \left(\varepsilon+\sum_{i=1}^n \lambda_ix_i\right).
}
Now we can pass the limit $\varepsilon \to 0$ to obtain \eq{FinWH}.
\end{proof}

Having this already proved, we can present a weighted analogue of \cite[Thm 3.3]{PalPas16}. By the virtue of Stolz's theorem \cite{Sto85}, its proof is significantly shortened.

\begin{lem} 
\label{lem:4}
Let $\M$ be an $R$-weighted mean on $I$. If $\sum_{n=1}^\infty \lambda_nx_n=\infty$ then 
\Eq{*}{
\Hc[\lambda]\M \ge \liminf_{n \to \infty} \frac1{x_n} \Mm_{i=1}^n \big(x_i,\lambda_i\big).
}
\end{lem}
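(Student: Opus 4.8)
The plan is to feed the finite Hardy inequality from Lemma~\ref{lem:5} into the Stolz--Cesàro theorem applied to the two partial-sum sequences on its sides. Concretely, I set
\Eq{*}{
S_n:=\sum_{i=1}^n \lambda_i \cdot \Mm_{j=1}^i\big(x_j,\lambda_j\big)
\qquad\text{and}\qquad
T_n:=\sum_{i=1}^n \lambda_i x_i.
}
Lemma~\ref{lem:5} asserts exactly that $S_n \le \Hc[\lambda]\M \cdot T_n$ for every $n\in\N$, hence $S_n/T_n \le \Hc[\lambda]\M$ whenever $T_n>0$; and by the standing hypothesis $\sum \lambda_n x_n=\infty$ the sequence $(T_n)$ diverges to $+\infty$. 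The decisive observation is that the consecutive increments reproduce precisely the quantity to be estimated, since
\Eq{*}{
\frac{S_{n+1}-S_n}{T_{n+1}-T_n}
=\frac{\lambda_{n+1}\,\Mm_{j=1}^{n+1}(x_j,\lambda_j)}{\lambda_{n+1}x_{n+1}}
=\frac1{x_{n+1}}\Mm_{j=1}^{n+1}(x_j,\lambda_j),
}
so the liminf of the increment ratios equals $L:=\liminf_{n\to\infty}\frac1{x_n}\Mm_{i=1}^n(x_i,\lambda_i)$.

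Granting the Stolz--Cesàro theorem in its liminf form (if $(T_n)$ is strictly increasing and divergent, then $\liminf_n \frac{S_{n+1}-S_n}{T_{n+1}-T_n}\le \liminf_n \frac{S_n}{T_n}$), the proof collapses to the chain
\Eq{*}{
L=\liminf_{n\to\infty}\frac{S_{n+1}-S_n}{T_{n+1}-T_n}
\le \liminf_{n\to\infty}\frac{S_n}{T_n}
\le \Hc[\lambda]\M,
}
the final inequality being Lemma~\ref{lem:5} used termwise. This is exactly the claimed estimate, and everything up to here is routine.

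The one step that genuinely needs care---and the main obstacle---is verifying the hypothesis of Stolz--Cesàro, namely that the denominator sequence $(T_n)$ be \emph{strictly} increasing. Divergence is free, but strict monotonicity can fail: the increment $T_{n+1}-T_n=\lambda_{n+1}x_{n+1}$ vanishes whenever $\lambda_{n+1}=0$, which is admissible for $\lambda\in W^0(R)$. I would remove these degenerate indices by restricting to the infinite set $N:=\{n\mid \lambda_n x_n>0\}$ (infinite because $\sum\lambda_n x_n=\infty$, and note $x_n>0$ is implicit in the quotient $1/x_n$) and applying Stolz--Cesàro to the subsequences $(S_n)_{n\in N}$ and $(T_n)_{n\in N}$, which are strictly increasing. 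For an index $n$ lying strictly between two consecutive elements of $N$ one has $\lambda_n=0$, so such a term contributes nothing to either partial sum; consequently the increment ratios along $N$ are again of the form $\frac1{x_m}\Mm_{j=1}^{m}(x_j,\lambda_j)$, and since a liminf taken along a subsequence is at least the full liminf $L$, the displayed chain persists verbatim after the passage to $N$ and still yields $L\le\Hc[\lambda]\M$.
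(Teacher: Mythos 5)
Your proof is correct and is essentially the paper's own argument: both feed the finite Hardy inequality of Lemma~\ref{lem:5} into the liminf form of the Stolz--Ces\`aro theorem, with the increment ratio collapsing to $\frac{1}{x_n}\Mm_{i=1}^{n}(x_i,\lambda_i)$, and both handle possibly vanishing weights by restricting the liminf to the indices where $\lambda_n>0$ and then using that a liminf along a subsequence dominates the full liminf. Nothing further is needed.
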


\begin{proof}
By Stolz's theorem and Lemma~\ref{lem:5}, we have
\Eq{*}{
\Hc[\lambda]\M 
&\ge \liminf_{N \to \infty,\,\lambda_N>0} \frac{\sum_{n=1}^N \lambda_n \cdot \Mm_{i=1}^n \big(x_i,\lambda_i\big)}{\sum_{n=1}^N \lambda_nx_n} \\
&\ge \liminf_{n \to \infty,\,\lambda_n>0} \frac{\lambda_n\Mm_{i=1}^n \big(x_i,\lambda_i\big)}{\lambda_nx_n}
\ge \liminf_{n \to \infty} \frac{\Mm_{i=1}^n \big(x_i,\lambda_i\big)}{x_n},
}
which was to be shown.
\end{proof}

\section{Main results}

In this section we will prove an important relation between the $\lambda$-Kedlaya and the $\lambda$-Hardy property. Having this, we will use the notation of $\WRX$ and $\WQX$ to present a handy characterization of the $\lambda$-Hardy property. In fact, a lot of statements will depend on the summability of the weight sequence $(\lambda_n)$. 

\begin{thm} 
\label{thm:HardyconstantforKedlaya}
Let $\M$ be an $R$-weighted mean on $I$ and $\lambda \in W^0(R)$. 
Define
\Eq{*}{
\Est[\lambda]\M:=\sup_{y>0} \liminf_{n \to \infty} \frac {\lambda_1+\lambda_2+\cdots+\lambda_n}y \cdot \Mm_{k=1}^n \Big(\frac{y}{\lambda_1+\lambda_2+\cdots+\lambda_k},\lambda_k\Big).
}
\begin{enumerate}[(i)]
\item If $\sum_{n=1}^\infty\lambda_n=\infty$, then $\Hc[\lambda]\M \ge \Est[\lambda]\M$.
\item If $\M$ is monotone and satisfies the $\lambda$-Kedlaya inequality, then $\Hc[\lambda]\M \le \Est[\lambda]\M$.
\end{enumerate}
\end{thm}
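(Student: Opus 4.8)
The plan is to establish the two inequalities separately: the lower bound (i) comes directly from the auxiliary lemmas, while the upper bound (ii) is extracted from the $\lambda$-Kedlaya inequality together with monotonicity of $\M$. Throughout, write $\Lambda_n:=\lambda_1+\cdots+\lambda_n$, and note that since $\inf I=0$ we have $x_i\ge0$ for all admissible entries.

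For part (i), I would fix a value $y>0$ for which all entries $y/\Lambda_k$ lie in $I$. As $\inf I=0$ and the sequence $(y/\Lambda_k)_k$ is nonincreasing, this only requires $y/\Lambda_1\in I$; these are exactly the $y$ over which $\Est[\lambda]\M$ is implicitly taken. I then test the Hardy inequality on the sequence $x_n:=y/\Lambda_n$. Because $\sum\lambda_n=\infty$, Lemma~\ref{lem:2} gives $\sum\lambda_nx_n=y\sum\lambda_n/\Lambda_n=\infty$, so Lemma~\ref{lem:4} applies and yields
$$\Hc[\lambda]\M\ge\liminf_{n\to\infty}\frac1{x_n}\Mm_{i=1}^n(x_i,\lambda_i)=\liminf_{n\to\infty}\frac{\Lambda_n}{y}\Mm_{i=1}^n\Big(\frac{y}{\Lambda_i},\lambda_i\Big).$$
Taking the supremum over all admissible $y$ gives $\Hc[\lambda]\M\ge\Est[\lambda]\M$.

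For part (ii), the guiding observation is that the left-hand side of the $\lambda$-Kedlaya inequality is, up to the factor $\Lambda_n$, precisely a Hardy partial sum. Fix a sequence $x$ and set $S:=\sum_{i=1}^\infty\lambda_ix_i$; the claim is trivial unless $0<S<\infty$, so assume this. Writing $A_k:=\Ar_{i=1}^k(x_i,\lambda_i)=\frac1{\Lambda_k}\sum_{i=1}^k\lambda_ix_i$, the $\lambda$-Kedlaya inequality reads
$$\frac1{\Lambda_n}\sum_{k=1}^n\lambda_k\Mm_{i=1}^k(x_i,\lambda_i)\le\Mm_{k=1}^n(A_k,\lambda_k).$$
Since $x_i\ge0$ we have $A_k\le S/\Lambda_k$, so monotonicity of $\M$ permits replacing each entry $A_k$ by $S/\Lambda_k$, giving
$$\sum_{k=1}^n\lambda_k\Mm_{i=1}^k(x_i,\lambda_i)\le\Lambda_n\Mm_{k=1}^n\Big(\frac{S}{\Lambda_k},\lambda_k\Big)=S\cdot\frac{\Lambda_n}{S}\Mm_{k=1}^n\Big(\frac{S}{\Lambda_k},\lambda_k\Big).$$

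The main obstacle is matching this with $\Est[\lambda]\M$: the right-hand side is a single-$n$ quantity, whereas $\Est[\lambda]\M$ is a $\liminf$ in $n$, and a naive limit would only produce a $\limsup$. This is resolved by exploiting that the left-hand partial sums are nondecreasing in $n$: for every fixed $N$ and every $n\ge N$ the left side at index $N$ is dominated by the right side at index $n$, so passing to $\liminf_{n\to\infty}$ on the right (the left being constant in $n$) yields
$$\sum_{k=1}^N\lambda_k\Mm_{i=1}^k(x_i,\lambda_i)\le S\cdot\liminf_{n\to\infty}\frac{\Lambda_n}{S}\Mm_{k=1}^n\Big(\frac{S}{\Lambda_k},\lambda_k\Big)\le S\cdot\Est[\lambda]\M.$$
Letting $N\to\infty$ gives the Hardy inequality with constant $\Est[\lambda]\M$, hence $\Hc[\lambda]\M\le\Est[\lambda]\M$. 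The remaining technical point is to ensure the substituted entries $S/\Lambda_k$ belong to $I$: this is automatic when $\sup I=\infty$, and when $I$ is bounded above it requires a short separate argument (for instance truncating the entries, or handling the summable case $\sum\lambda_n<\infty$ on its own), which I expect to be where most of the bookkeeping lies.
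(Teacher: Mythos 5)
Your proposal is correct and follows essentially the same route as the paper: part (i) tests the Hardy inequality on $x_n=y/\Lambda_n$ via Lemmas~\ref{lem:2} and \ref{lem:4}, and part (ii) combines the $\lambda$-Kedlaya inequality with monotonicity to bound the partial Hardy sums by $\Lambda_n\Mm_{k=1}^n(S/\Lambda_k,\lambda_k)$ and then passes to the $\liminf$ using that the left-hand sums are nondecreasing in $n$ (the paper does this in one line by ``taking the liminf''). The domain issue you flag at the end (whether $S/\Lambda_k\in I$) is likewise left implicit in the paper, so your argument matches its level of rigor.
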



\begin{proof}
Denote the partial sum of $\lambda_1+\cdots+\lambda_k$ by $\Lambda_k$. In the first part, Lemma~\ref{lem:2} implies
\Eq{*}{
\sum_{n=1}^{\infty} \lambda_n \cdot \frac{y}{\Lambda_n}=\infty \qquad \text{ for all }\qquad y>0.
}
Consequently, by Lemma~\ref{lem:4}, 
\Eq{*}{
\Hc[\lambda]\M \ge \liminf_{n \to \infty} \frac{\Lambda_n}y \Mm_{k=1}^n \Big( \frac{y}{\Lambda_k},\lambda_k\Big) \qquad \text{ for all }\qquad y>0.
}
Finally, we can take the supremum over all positive $y$ and obtain $\Hc[\lambda]\M \ge \Est[\lambda]\M$.

To prove part (ii), let $x \in \ell^1(\lambda)$ be a sequence of positive numbers and $y_0:=\sum_{n=1}^{\infty} \lambda_nx_n$. Then
\Eq{*}{
m_k=\frac1{\Lambda_k}\sum_{i=1}^k \lambda_i \cdot x_i \le \frac{y_0}{\Lambda_k}, \qquad k \in \N. 
}
The $(n,\lambda)$-Kedlaya inequality applied to the vector $(x_1,x_2,\dots,x_n)$ and the monotonicity of $\M$ imply
\Eq{*}{
\sum_{k=1}^n \lambda_k \cdot \Mm_{i=1}^k (x_i,\lambda_i) 
\le \Lambda_n \cdot \Mm_{k=1}^n \Big( m_k,\lambda_k\Big) 
\le \Lambda_n \cdot \Mm_{k=1}^n \Big( \frac{y_0}{\Lambda_k},\lambda_k\Big).
}
Upon taking the $\liminf$ as $n$ tends to $\infty$, we obtain
\Eq{*}{
\sum_{k=1}^\infty \lambda_k \cdot \Mm_{i=1}^k (x_i,\lambda_i) 
\le \bigg( \liminf_{n \to \infty} \frac{\Lambda_n}{y_0} \cdot \Mm_{k=1}^n \Big( \frac{y_0}{\Lambda_k},\lambda_k\Big) \bigg) \cdot y_0
\le \Est[\lambda]\M \sum_{n=1}^{\infty} \lambda_nx_n.
}
Therefore, the desired inequality $\Hc[\lambda]\M \le \Est[\lambda]\M$ follows.
\end{proof}

At the moment, using Theorem~\ref{thm:KedVQVR}, we obtain two direct consequences of Theorem~\ref{thm:HardyconstantforKedlaya}.

\begin{cor}
\label{cor:HardyconstantVQ}
Let $\M$ be a symmetric, monotone and Jensen-concave $\Q$-weighted mean and $\lambda \in \WQX$. Then $\Hc[\lambda]\M \le \Est[\lambda]\M$. Furthermore, if $\sum_{n=1}^\infty\lambda_n=\infty$, then $\Hc[\lambda]\M = \Est[\lambda]\M$.
\end{cor}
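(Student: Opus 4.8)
The plan is to obtain Corollary~\ref{cor:HardyconstantVQ} as a direct specialization of Theorem~\ref{thm:HardyconstantforKedlaya}, using Theorem~\ref{thm:KedVQVR} to verify the hypothesis of part~(ii). The corollary assumes $\M$ is a symmetric, monotone, Jensen-concave $\Q$-weighted mean and $\lambda \in \WQX$. The first step is to recognize that these are precisely the ingredients needed to invoke Theorem~\ref{thm:KedVQVR}: since $\M$ is symmetric and Jensen concave as a $\Q$-weighted mean, and $\lambda \in V(\Q)=\WQX$, that theorem guarantees $\M$ satisfies the $\lambda$-weighted Kedlaya inequality. Combined with the assumed monotonicity of $\M$, the two hypotheses of Theorem~\ref{thm:HardyconstantforKedlaya}(ii) are met, so $\Hc[\lambda]\M \le \Est[\lambda]\M$ follows immediately.

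For the second assertion, I would invoke part~(i) of Theorem~\ref{thm:HardyconstantforKedlaya} under the additional hypothesis $\sum_{n=1}^\infty\lambda_n=\infty$. That part yields the reverse inequality $\Hc[\lambda]\M \ge \Est[\lambda]\M$ with no structural assumption on $\M$ beyond its being an $R$-weighted mean, which is satisfied here. Sandwiching the two bounds gives the equality $\Hc[\lambda]\M = \Est[\lambda]\M$.

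The entire argument is therefore a short two-line deduction: apply Theorem~\ref{thm:KedVQVR} to transfer symmetry and concavity into the $\lambda$-Kedlaya property, then feed monotonicity and the Kedlaya property into Theorem~\ref{thm:HardyconstantforKedlaya}(ii) for the upper bound, and (when $\lambda$ is non-summable) Theorem~\ref{thm:HardyconstantforKedlaya}(i) for the matching lower bound. There is essentially no obstacle to overcome, since all the substantive work has already been carried out in the cited theorems; the only point requiring a moment's care is to confirm that the notation $\WQX$ used in the corollary coincides with the set $V(\Q)$ appearing in the statement of Theorem~\ref{thm:KedVQVR}, so that the Kedlaya inequality is indeed available for the given weight sequence $\lambda$.
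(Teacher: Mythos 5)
Your proposal is correct and matches the paper exactly: the corollary is stated there as a direct consequence of Theorem~\ref{thm:HardyconstantforKedlaya}, with Theorem~\ref{thm:KedVQVR} supplying the $\lambda$-Kedlaya inequality for symmetric, Jensen-concave $\Q$-weighted means and $\lambda\in\WQX=V(\Q)$, and part~(i) giving the matching lower bound when $\sum\lambda_n=\infty$. Nothing further is needed.
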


\begin{cor}
\label{cor:HardyconstantVR}
Let $\M$ be a symmetric, monotone and Jensen-concave $\R$-weighted mean which is continuous in the weights and $\lambda \in \WRX$. Then $\Hc[\lambda]\M \le \Est[\lambda]\M$. Furthermore, if $\sum_{n=1}^\infty\lambda_n=\infty$, then $\Hc[\lambda]\M = \Est[\lambda]\M$.
\end{cor}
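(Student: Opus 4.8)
The plan is to obtain this corollary as an immediate consequence of the two theorems assembled in the previous subsections, so the entire argument amounts to checking that the hypotheses line up. First I would invoke Theorem~\ref{thm:KedVQVR}: since $\M$ is a symmetric and Jensen-concave $\R$-weighted mean that is continuous in the weights, the real-variable clause of that theorem applies and guarantees that $\M$ satisfies the $\lambda$-weighted Kedlaya inequality for every $\lambda \in \WRX$. As $\lambda \in \WRX$ is assumed, $\M$ is in particular a $\lambda$-Kedlaya mean.

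Next I would feed this into Theorem~\ref{thm:HardyconstantforKedlaya}. Part~(ii) of that theorem requires precisely that $\M$ be monotone and satisfy the $\lambda$-Kedlaya inequality; monotonicity is among the assumed properties and the Kedlaya inequality was just secured, so part~(ii) yields $\Hc[\lambda]\M \le \Est[\lambda]\M$. This already establishes the first assertion. For the supplementary claim, I would add the hypothesis $\sum_{n=1}^\infty \lambda_n = \infty$ and apply part~(i) of the same theorem, which gives the reverse estimate $\Hc[\lambda]\M \ge \Est[\lambda]\M$; combining the two inequalities produces the equality $\Hc[\lambda]\M = \Est[\lambda]\M$.

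There is no genuine obstacle here, since every quantitative step has already been carried out in Theorem~\ref{thm:HardyconstantforKedlaya}; the only thing to verify is that the assumptions of the corollary are exactly those needed to activate both quoted results. The sole subtlety worth flagging is the distinction from Corollary~\ref{cor:HardyconstantVQ}: in the rational case one uses the $\Q$-clause of Theorem~\ref{thm:KedVQVR} together with $\lambda \in \WQX$ and no continuity requirement, whereas the real case treated here relies on continuity in the weights and on $\lambda \in \WRX$. Confirming that this pairing of hypotheses is correct completes the proof.
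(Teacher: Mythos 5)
Your proposal is correct and matches the paper's own (implicit) argument exactly: the corollary is stated as a direct consequence of Theorem~\ref{thm:KedVQVR} (real-weighted, continuous-in-the-weights clause) feeding the $\lambda$-Kedlaya hypothesis into parts (ii) and (i) of Theorem~\ref{thm:HardyconstantforKedlaya}. Nothing further is needed.
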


We can also apply this theorem to justify $\lambda$-Hardy property.
\begin{cor} 
Let $\M$ be an $R$-weighted mean on $I$ and $\lambda \in W^0(R)$. 
\begin{enumerate}[(i)]
\item If $\sum_{n=1}^\infty\lambda_n=\infty$ and $\Est[\lambda]\M=\infty$, then $\M$ is not a $\lambda$-Hardy mean.
\item If $\M$ is a monotone mean which satisfies the $\lambda$-Kedlaya inequality and $\Est[\lambda]\M$ is finite, then $\M$ is a $\lambda$-Hardy mean.
\end{enumerate}
\end{cor}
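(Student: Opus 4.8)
The plan is to read off both assertions directly from Theorem~\ref{thm:HardyconstantforKedlaya}, using only the defining property that $\M$ is a $\lambda$-Hardy mean if and only if its $\lambda$-Hardy constant $\Hc[\lambda]\M$ is finite. Thus the whole corollary amounts to translating the two inequalities of that theorem into statements about finiteness of $\Hc[\lambda]\M$.

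For part (i), I would assume $\sum_{n=1}^\infty \lambda_n=\infty$ and $\Est[\lambda]\M=\infty$. The divergence hypothesis is exactly what is required to invoke part (i) of Theorem~\ref{thm:HardyconstantforKedlaya}, which yields $\Hc[\lambda]\M \ge \Est[\lambda]\M$. Combining this with $\Est[\lambda]\M=\infty$ gives $\Hc[\lambda]\M \ge \Est[\lambda]\M=\infty$, hence $\Hc[\lambda]\M=\infty$, and therefore $\M$ is not a $\lambda$-Hardy mean.

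For part (ii), I would assume that $\M$ is monotone, satisfies the $\lambda$-Kedlaya inequality, and that $\Est[\lambda]\M$ is finite. The first two hypotheses are precisely those of part (ii) of Theorem~\ref{thm:HardyconstantforKedlaya}, which gives $\Hc[\lambda]\M \le \Est[\lambda]\M$. Together with the finiteness of $\Est[\lambda]\M$, this yields $\Hc[\lambda]\M<\infty$, so that $\M$ is a $\lambda$-Hardy mean.

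Since every step is a direct invocation of an already-established inequality, there is no genuine obstacle here; the corollary is merely a reformulation of Theorem~\ref{thm:HardyconstantforKedlaya} in the language of the $\lambda$-Hardy property. The only point worth flagging is to keep track of which summability regime each part inhabits: part (i) needs the divergence $\sum \lambda_n=\infty$ to license the lower bound, whereas part (ii) imposes no assumption on the summability of $\lambda$, relying instead on monotonicity together with the $\lambda$-Kedlaya property to secure the upper bound.
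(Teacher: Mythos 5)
Your argument is correct and is exactly how the paper obtains this corollary: it is stated as an immediate consequence of Theorem~\ref{thm:HardyconstantforKedlaya}, with part (i) following from the lower bound $\Hc[\lambda]\M \ge \Est[\lambda]\M$ under divergence of $\sum\lambda_n$, and part (ii) from the upper bound $\Hc[\lambda]\M \le \Est[\lambda]\M$ under monotonicity and the $\lambda$-Kedlaya property. Your remark that part (ii) needs no summability assumption is also consistent with the paper.
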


\section{Proof of Theorem~\ref{thm:mu1}}

Let us mention some further definitions and notations from \cite{PalPas17b}. Instead of explicitly writing down weights, we can consider a function with finite range as the argument of the given mean. Let $R$ be a subring of $\R$. We will denote its quotient field (the smallest field generated by $R$) by $R^*$. We say that $D\subseteq\R$ is an \emph{$R$-interval} if $D$ is of the form $[a,b)$, where $a,b\in R$.

Given an $R$-interval $D=[a,b)$, a function $f\colon D\to I$ is called \emph{$R$-simple} if there exist $n \in \N$ and a partition of $D$ into $R$-intervals $\{D_i\}_{i=1}^n$ such that $\sup D_i=\inf D_{i+1}$ for $i\in\{1,\dots,n-1\}$ and $f$ is constant on each subinterval $D_i$. Then, for an $R$-weighted mean $\M$ on $I$, we define
\Eq{*}{
\Mm_a^b f(x)dx:=\Mm_{i=1}^n (f|_{D_i},|D_i|)=\M((f|_{D_1},\dots,f|_{D_n}),(|D_1|,\dots,|D_n|)).
}

In this setting, $\M$ is symmetric if and only if, for every pair of $R$-simple functions $f,g \colon D \to I$ which have the same
distribution, the equality $\M f(x)dx=\M g(x)dx$ holds. Similarly, $\M$ is monotone if and only if, for every pair of $R$-simple functions $f,g \colon D \to I$ with $f\le g$, the inequality $\M f(x)dx \le \M g(x)dx$ is valid.

Furthermore, for an $R$-interval $[p,\,q) \subset D$ and function $f$ like above, we will keep all integral-type convections. For instance,
\Eq{*}{
\Mm_{[p,\,q)}f(x)dx=\Mm_p^q f(x)dx.
}

Let us now present some simple result related to decreasing functions.

\begin{lem}
\label{lem:decr}
Let $\M$ be a $R^*$-weighted, monotone mean on $I$ and $a \in R^* \cap (0,\infty)$. Then, for any nonincreasing $R^*$-simple function $f\colon [0,a) \to I$, the mapping $F\colon R^*\cap(0,a] \to I$ given by $F(u):=\Mm_0^uf(t)\:dt$, is nonincreasing.
\end{lem}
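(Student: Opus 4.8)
The plan is to fix $u,u'\in R^*\cap(0,a]$ with $u<u'$ and to prove the single inequality $F(u)\ge F(u')$, which is exactly the assertion that $F$ is nonincreasing. The guiding idea is to bring both quantities onto the common domain $[0,u')$, so that the functional form of monotonicity of $\M$ (namely, that $f\le g$ on $D$ implies $\Mm f(x)dx\le\Mm g(x)dx$) can be applied directly; the nonincreasingness of $f$ is what will produce the required pointwise comparison.

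First I would introduce the rescaled function $\hat f\colon[0,u')\to I$ defined by $\hat f(t):=f\big(\tfrac{u}{u'}\,t\big)$. If $f$ equals $v_i$ on the $R^*$-interval $[s_{i-1},s_i)$ (with $s_0=0$ and $s_n=u$), then $\hat f$ equals $v_i$ on $[\tfrac{u'}{u}s_{i-1},\tfrac{u'}{u}s_i)$; since $R^*$ is a field and $u,u',s_i\in R^*$, these endpoints again lie in $R^*$, so $\hat f$ is an $R^*$-simple function on $[0,u')$. I would then observe that the partition lengths of $\hat f$ are exactly $\tfrac{u'}{u}$ times those of $f$, whence the nullhomogeneity of $\M$ in the weights (applied with the positive factor $t=\tfrac{u'}{u}\in R^*$) yields $\Mm_0^{u'}\hat f(t)\,dt=\Mm_0^u f(t)\,dt=F(u)$.

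Next I would establish the pointwise estimate $\hat f\ge f$ on $[0,u')$. For $t\in[0,u')$ we have $\tfrac{u}{u'}t\le t$, and since $f$ is nonincreasing and defined on all of $[0,a)\supseteq[0,u')$, this gives $\hat f(t)=f\big(\tfrac{u}{u'}t\big)\ge f(t)$. Applying the monotonicity of $\M$ to the pair $f\le\hat f$ of $R^*$-simple functions on $[0,u')$ then delivers $F(u')=\Mm_0^{u'}f(t)\,dt\le\Mm_0^{u'}\hat f(t)\,dt=F(u)$, which is the desired inequality.

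The only genuinely delicate point is the bookkeeping in the rescaling step: one must verify that $\hat f$ is an $R^*$-simple function (this is precisely where passing to the quotient field $R^*$ matters, since the dilation factor $\tfrac{u'}{u}$ need not belong to $R$) and that its weight vector is the $\tfrac{u'}{u}$-multiple of that of $f$, so that nullhomogeneity applies verbatim. Once $\hat f$ and $f$ are recognized as $R^*$-simple functions on the common domain $[0,u')$ satisfying $f\le\hat f$, the functional form of monotonicity finishes the argument with no further computation.
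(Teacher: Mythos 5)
Your proposal is correct and follows essentially the same route as the paper: rescale one of the two means onto a common domain via nullhomogeneity in the weights, then compare the resulting $R^*$-simple functions pointwise using the nonincreasingness of $f$ and conclude by monotonicity of $\M$. The only cosmetic difference is the direction of the dilation (you stretch $f|_{[0,u)}$ onto $[0,u')$ via $t\mapsto f(\tfrac{u}{u'}t)$, while the paper compresses $f|_{[0,p)}$ onto $[0,q)$ via $t\mapsto f(\tfrac{p}{q}t)$), which changes nothing of substance.
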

\begin{proof}
Fix $p,\,q \in R^*\cap(0,a]$ with $q<p$. As $f$ is decreasing, we know that $f(\tfrac pq t) \le f(t)$ for all $t \in [0,q)$. Therefore, by nullhomogeneity in the weights and monotonicity,
\Eq{*}{
F(p)=\Mm_0^p f(t) \:dt= \Mm_0^q f\Big(\dfrac pq \cdot t\Big)\:dt \le \Mm_0^q f(t)\:dt=F(q),
}
which was to be proved.
\end{proof}

By Theorem~\ref{thm:fieldex} we know that $\M$ has a unique extension to an $R^*$-weighted mean on $I$. As the weights are fixed (and belong to $R$) one can assume without loss of generality that we are dealing with weight sequence from $R^*$. Consequently, as it is handy, $\M$ is a $R^*$-weighted mean. 

To verify Theorem~\ref{thm:mu1}, it suffices to prove that, for all $N\in \N$, $\lambda \in W^0(R^*)$ and $x \in I^N$, there holds
\Eq{mu1F}{
\sum_{n=1}^N \lambda_n \Mm_{i=1}^n \big(x_i,\lambda_i\big) \le \Hc[\vone] \M \sum_{n=1}^N \lambda_nx_n.
}
Indeed, if we pass the limit $N \to \infty$, this inequality would imply $\Hc[\lambda]\M \le\Hc[\vone]\M$.

This proof is split into two parts. In fact each part can be formulated as a separate lemma.
\begin{lem}
\label{lem:monotonereduction}
Let $\M$ be a monotone $R^*$-weighted mean on $I$. Then, for all $N \in \N$, for all nonincreasing sequences $x \in I^N$ and weights $\lambda\in W^0_N(R^*)$, the inequality \eq{mu1F} is valid.
\end{lem}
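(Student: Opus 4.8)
The plan is to realize both sides of \eq{mu1F} through the integral representation of $\M$ over $R^*$-simple functions, and then to combine the finite $\vone$-Hardy inequality of Lemma~\ref{lem:5} with the monotonicity of the antiderivative furnished by Lemma~\ref{lem:decr}. Set $\Lambda_0:=0$ and let $g\colon[0,\Lambda_N)\to I$ be the $R^*$-simple function with $g|_{[\Lambda_{n-1},\Lambda_n)}:=x_n$ for $n\in\{1,\dots,N\}$. As $x$ is nonincreasing, $g$ is nonincreasing, and by the definition of the integral-type notation $\Mm_{i=1}^n(x_i,\lambda_i)=\Mm_0^{\Lambda_n}g(t)\,dt=:F(\Lambda_n)$, where $F(u):=\Mm_0^u g(t)\,dt$. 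Lemma~\ref{lem:decr} then guarantees that $F$ is nonincreasing on $R^*\cap(0,\Lambda_N]$, so the left-hand side of \eq{mu1F} equals $\sum_{n=1}^N\lambda_n F(\Lambda_n)$.

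I would next extend $F$ to an honest nonincreasing function $\bar F$ on the real interval $(0,\Lambda_N]$ by $\bar F(u):=\inf\{F(v):v\in R^*\cap(0,u]\}$; monotonicity of $F$ makes $\bar F=F$ on $R^*$ and shows $\bar F$ is nonincreasing and bounded (the mean value property confines its values to $[x_N,x_1]$), hence Riemann integrable. Since $\bar F$ is nonincreasing, each $\lambda_n F(\Lambda_n)=\lambda_n\bar F(\Lambda_n)$ is a lower rectangle for $\bar F$ on $[\Lambda_{n-1},\Lambda_n]$, whence
\Eq{*}{
\sum_{n=1}^N\lambda_n F(\Lambda_n)\le\int_0^{\Lambda_N}\bar F(u)\,du.
}

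It remains to bound this integral by $\Hc[\vone]\M\sum_{n=1}^N\lambda_nx_n$ (the case $\Hc[\vone]\M=\infty$ being trivial). For $M\in\N$ put $c:=\Lambda_N/M\in R^*\cap(0,\infty)$ and sample $\bar F$ on the uniform grid $jc$, $j=1,\dots,M$, where $\bar F(jc)=F(jc)=\Mm_0^{jc}g(t)\,dt$. Let $\tilde g$ be the nonincreasing, grid-constant majorant of $g$ taking on each cell $[(j-1)c,jc)$ the left-endpoint value of $g$; then $g\le\tilde g$, so monotonicity of $\M$ gives $F(jc)\le\Mm_0^{jc}\tilde g(t)\,dt$. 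Applying Lemma~\ref{lem:5} with the weight $\vone$ to the equal-weight configuration underlying $\tilde g$ (legitimate after rescaling all weights from $c$ to $1$ via nullhomogeneity) yields
\Eq{*}{
\sum_{j=1}^M c\,\Mm_0^{jc}\tilde g(t)\,dt\le\Hc[\vone]\M\int_0^{\Lambda_N}\tilde g(u)\,du.
}
Because $\bar F$ is nonincreasing, its right Riemann sums $\sum_{j=1}^M c\,F(jc)$ converge to $\int_0^{\Lambda_N}\bar F$, while $\int_0^{\Lambda_N}\tilde g\to\int_0^{\Lambda_N}g=\sum_{n=1}^N\lambda_nx_n$, since $\tilde g$ differs from $g$ only on the at most $N$ cells meeting a breakpoint and the total excess tends to $0$. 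Passing to the limit $M\to\infty$ gives $\int_0^{\Lambda_N}\bar F\le\Hc[\vone]\M\sum_{n=1}^N\lambda_nx_n$, which combined with the first display is precisely \eq{mu1F}.

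The main obstacle is that $\lambda$ runs over the field $R^*$, so the weights may be mutually incommensurable and cannot be cleared to integers, and since $\M$ is only assumed monotone (not continuous in the weights) the weights themselves may not be perturbed or approximated. The resolution is to keep the weights frozen and discretize the variable $u$ instead, pushing all approximation onto the \emph{entries} through the monotone majorant $\tilde g$ where monotonicity of $\M$ is available; this is exactly what permits the finite $\vone$-Hardy inequality to be invoked at every scale before the limit is taken. The routine points I would still need to verify are the Riemann-integrability bookkeeping for $\bar F$ and the estimate of $\int_0^{\Lambda_N}(\tilde g-g)$.
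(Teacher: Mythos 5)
Your proof is correct and follows essentially the same route as the paper's: the integral representation by a nonincreasing step function, the infimum-extension of $u\mapsto\Mm_0^u$ made monotone via Lemma~\ref{lem:decr}, the comparison with a left-endpoint grid-constant majorant, the finite equal-weight Hardy inequality applied to the grid samples, and the passage to the limit over grid refinements all appear, in the same roles, in the paper's argument. The only cosmetic differences are that you build the monotone extension $\bar F$ once from $g$ and discretize afterwards, whereas the paper constructs the analogous function $C_j$ from the already-discretized majorant $f_j$ for each $j$, and that you invoke Lemma~\ref{lem:5} with $\lambda=\vone$ where the paper cites the finite non-weighted Hardy inequality from \cite{PalPas16}.
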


\begin{lem}
\label{lem:rearrangement}
Let $\M$ be a symmetric and monotone $R$-weighted mean on $I$. Then, for all $N\in\N$, for all vectors $x\in I^N$ and weights $\lambda \in W^0_N(R)$, there exist $M \in \N$, a nonincreasing sequence $y\in I^M$ and a weight sequence $\psi\in W^0_M(R)$ such that
\Eq{equidistributed}{
\sum_{\{n\colon x_n=t\}} \lambda_n =\sum_{\{m\colon y_m=t\}} \psi_m
}
for all $t\in\R$ and
\Eq{Nonincreq}{
\sum_{n=1}^N \lambda_n \Mm_{i=1}^n \big(x_i,\lambda_i\big)&\le \sum_{m=1}^M \psi_m \Mm_{i=1}^m \big(y_i,\psi_i\big).
}
\end{lem}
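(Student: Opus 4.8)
The plan is to translate the Hardy sum into the $R$-simple-function calculus set up above and to compare $x$ with its nonincreasing rearrangement. By the elimination principle I may first discard every entry carrying zero weight; this affects neither side of \eq{Nonincreq} (a zero coefficient kills the corresponding summand, and by elimination the later prefix means are unchanged) nor the distribution condition \eq{equidistributed}, so I assume $\lambda_i>0$ for all $i$. Put $\Lambda_0:=0$, $\Lambda_n:=\lambda_1+\cdots+\lambda_n$ and let $f\colon[0,\Lambda_N)\to I$ be the $R$-simple function equal to $x_n$ on $[\Lambda_{n-1},\Lambda_n)$. Then $\Mm_{i=1}^n(x_i,\lambda_i)=\Mm_0^{\Lambda_n}f$, so, writing $H(u):=\Mm_0^uf$, the left-hand side of \eq{Nonincreq} is $\sum_{n=1}^N\lambda_nH(\Lambda_n)$. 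Let $f^*\colon[0,\Lambda_N)\to I$ be the nonincreasing rearrangement of $f$; it is again $R$-simple (its level lengths are sums of the $\lambda_i$, hence lie in $R$) and equidistributed with $f$. Representing $f^*$ on the common refinement $0=s_0<s_1<\cdots<s_M=\Lambda_N$ of its level partition and of $\{\Lambda_n\}_{n=0}^N$, I read off the nonincreasing values $y_m:=f^*|_{(s_{m-1},s_m)}$ and the positive weights $\psi_m:=s_m-s_{m-1}\in R$, so $\psi\in W^0_M(R)$; equidistribution gives \eq{equidistributed}, and the right-hand side of \eq{Nonincreq} is $\sum_{m=1}^M\psi_mH^*(s_m)$, where $H^*(u):=\Mm_0^uf^*$.

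The argument then hinges on two properties of $H^*$. First, since $f^*$ is nonincreasing, Lemma~\ref{lem:decr} (applied to the $R^*$-extension of $\M$ provided by Theorem~\ref{thm:fieldex}) shows that $H^*$ is nonincreasing. Second — the crux of the proof — I would establish that the prefix means can only grow under rearrangement:
\Eq{*}{
H(u)\le H^*(u)\qquad(u\in R\cap(0,\Lambda_N]).
}
To prove this, fix such a $u$ and note that the distribution function of $f|_{[0,u)}$ is pointwise dominated by that of $f$; consequently the nonincreasing rearrangement of $f|_{[0,u)}$ lies pointwise below $f^*|_{[0,u)}$. Invoking symmetry to pass from $f|_{[0,u)}$ to its rearrangement without altering the mean, and then monotonicity, I obtain
\Eq{*}{
H(u)=\Mm_0^uf=\Mm_0^u(f|_{[0,u)})^*\le\Mm_0^u(f^*|_{[0,u)})=H^*(u).
}

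Granting these two facts, the conclusion follows by a Riemann-sum comparison. Grouping the points $s_1,\dots,s_M$ according to the interval $(\Lambda_{n-1},\Lambda_n]$ containing them, the associated weights $\psi_m$ sum to $\lambda_n$; moreover $s_m\le\Lambda_n$ forces $H^*(s_m)\ge H^*(\Lambda_n)$ by the monotonicity of $H^*$, and $H^*(\Lambda_n)\ge H(\Lambda_n)$ by the displayed inequality. Summing the blocks yields
\Eq{*}{
\sum_{m=1}^M\psi_mH^*(s_m)\ge\sum_{n=1}^N\lambda_nH^*(\Lambda_n)\ge\sum_{n=1}^N\lambda_nH(\Lambda_n),
}
which is precisely \eq{Nonincreq}.

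The main obstacle I anticipate is the pointwise estimate $H\le H^*$, specifically the verification that the nonincreasing rearrangement of $f|_{[0,u)}$ sits below $f^*|_{[0,u)}$ and that all the functions involved remain $R$-simple with ring-valued breakpoints, so that symmetry and monotonicity of $\M$ may be applied verbatim. Everything else — the $R$-simplicity of $f^*$, the monotonicity of $H^*$ through Lemma~\ref{lem:decr}, and the final block estimate — is routine bookkeeping.
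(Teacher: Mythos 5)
Your proposal is correct and follows essentially the same route as the paper's own proof: pass to the $R$-simple function $f$, take its nonincreasing rearrangement $f^*$ on a common refinement of the level partition and the points $\Lambda_n$, prove the prefix-mean comparison $\Mm_0^u f \le \Mm_0^u f^*$ via symmetry plus monotonicity, and finish with the block estimate using the monotonicity of $u\mapsto\Mm_0^u f^*$ from Lemma~\ref{lem:decr}. The only cosmetic difference is the order of the final two inequalities, which is immaterial.
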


Let us underline that the the fact that sum of $\lambda$-s and $\psi$-s are equal is not used in the proof of main theorem, however it could be useful in potential another use.

Having these two lemmas, for a given sequence $x=(x_1,x_2,\dots)$ with weights $\lambda \in W^0(R^*)$ and $N\in\N$, we can apply Lemma~\ref{lem:rearrangement} and then Lemma~\ref{lem:monotonereduction} to a vector $y\in I^M$ with corresponding weights $\psi$ to obtain
\Eq{*}{
\sum_{n=1}^N \lambda_n \Mm_{i=1}^n \big(x_i,\lambda_i\big)&\le \sum_{m=1}^M \psi_m \Mm_{i=1}^m \big(y_i,\psi_i\big) \le \Hc[\vone]\M \sum_{n=1}^M \psi_ny_n = \Hc[\vone]\M \sum_{n=1}^N \lambda_nx_n 
}
Then, if we pass the limit $N\to\infty$, we get
\Eq{*}{
\sum_{n=1}^{\infty} \lambda_n \Mm_{i=1}^n \big(x_i,\lambda_i\big)\le \Hc[\vone]\M \sum_{n=1}^{\infty} \lambda_nx_n \quad (\lambda \in W^0(R^*)\:),
}
which obviously implies $\Hc[\lambda]\M\le\Hc[\vone]\M$. As $\vone \in W^0(R)$, the equality in Theorem~\ref{thm:mu1} simply follows.

In order to make the proofs more compact, define $\Lambda_n:=\lambda_1+\cdots+\lambda_n$ for $n \in\{1,\dots,N\}$. In view of the nullhomogeneity of the mean $\M$, we may also assume that $\Lambda_N=1$. Now, define the function $f \colon [0,1) \to \R$ as follows
\Eq{*}{
f|_{[\Lambda_{n-1},\Lambda_n)}:=x_n,\qquad n \in \{1,\dots,N\}.
}
Then, we have that
\Eq{*}{
  \Mm_0^{\Lambda_n} f(x)dx=\Mm_{i=1}^n \big(x_i,\lambda_i\big), \qquad n \in \{1,\dots,N\}.
}

\begin{proof}[Proof of Lemma~\ref{lem:monotonereduction}]
First observe that, if $\Hc[\vone]\M=\infty$, then this lemma is trivial. From now on suppose that $\Hc[\vone]\M$ is finite.
Define, for $j\in\N$, the function $f_j \colon [0,1) \to I$ by 
\Eq{*}{
f_j|_{ [n/j,\,(n+1)/j)}:=f\big(\tfrac{n}{j}\big) \quad \text{ for all } n\in\{0,\dots,j-1\}.
}
As the sequence $x$ is nonincreasing, thus $f$ is nonincreasing, too. Therefore, $f \le f_j$ and $f_j$ is nonincreasing for every $j \in \N$. Thus, by Lemma~\ref{lem:decr}, so is the function $C_j \colon [0,1) \to I$ given by
\Eq{*}{
C_j(t):=\begin{cases}
         \inf\limits_{\substack{s\le t \\ s \in R^*}}\Mm_0^s f_j(x) dx &\mbox{ if }t\in(0,1), \\[2mm]
         x_1 &\mbox{ if }t=0,
        \end{cases} \qquad (j\in\N).
}
As $C_j$ is monotonic, it is also Riemann integrable. Using these properties, we get
\Eq{*}{
\lambda_n \cdot \Mm_{i=1}^n \big(x_i,\lambda_i\big) &= \lambda_n \cdot \Mm_0^{\Lambda_n} f(x) dx \le \lambda_n \cdot \Mm_0^{\Lambda_n} f_j(x) dx\\
&= \lambda_n \cdot C_j(\Lambda_n)= \int_{\Lambda_{n-1}}^{\Lambda_n} C_j(\Lambda_n) dx\le \int_{\Lambda_{n-1}}^{\Lambda_n} C_j(x) dx.
}
Therefore, for all $j\in\N$,
\Eq{Fin1}{
\sum_{n=1}^{N} \lambda_n \cdot \Mm_{i=1}^n \big(x_i,\lambda_i\big) 
&\le \int_{0}^1 C_j(x) dx.
}

We are now going to majorize the right hand side of this inequality. Observe first that, for all $j \in\N$,
\Eq{SP1}{
\int_{0}^{\frac1{j}} C_j(x)dx \le \frac1j\cdot C_j(0) = \frac{x_1}j.
}
Furthermore, for all $j,\,n \in \N$ such that $n<j$,
\Eq{SP2}{
\int_{\frac{n}{j}}^{\frac{n+1}{j}} C_j(x)dx 
\le \frac1j \cdot C_j\big(\tfrac{n}j \big)
=\frac1j\cdot \Mm_0^{\frac{n}{j}} f_j(x) dx
=\frac1j\cdot \Mm_{i=0}^{n} \big(f_j \big(\tfrac{i}{j}\big),1\big).
}
If we now sum up \eq{SP1} and \eq{SP2} for all $n\in\{1,\dots,j-1\}$, we obtain, for all $j\ge 2$,
\Eq{Fin2}{
\int_{0}^1 C_j(x) dx 
\leq \frac1j\bigg( x_1 + \sum_{n=1}^{j-1} \Mm_{i=0}^{n} \big(f_j \big(\tfrac{i}{j}\big),1\big)\bigg).
}
However, $\M$ is a $\vone$-weighted Hardy mean. In this setting by \cite[Proposition 3.1]{PalPas16}, 
we have that finite estimation announced in the definition of Hardy constant remains valid for finite sequences too. That is
\Eq{Fin3}{
\sum_{n=1}^{j-1} \Mm_{i=0}^{n} \big(f_j \big(\tfrac{i}{j}\big),1\big)
\le \Hc[\vone]\M \cdot \sum_{n=0}^{j-1} f_j\big(\tfrac{n}{j}\big) \qquad(j\ge 2).
}
Moreover, as $f$ is nonincreasing, we have
\Eq{Fin4}{
\frac1j  \sum_{n=0}^{j-1} f_j\big(\tfrac{n}{j}\big)
=\frac1j  \sum_{n=0}^{j-1} f\big(\tfrac{n}{j}\big)
\le \frac{x_1}{j}+\int_0^1 f(x) dx 
= \frac{x_1}{j}+\sum_{n=1}^{N} \lambda_n x_n \qquad(j\ge 2).
}
Now combining \eq{Fin1}, \eq{Fin2}, \eq{Fin3}, and \eq{Fin4}, for $j \ge 2$, we obtain
\Eq{*}{
\sum_{n=1}^{N} \lambda_n \cdot & \Mm_{i=1}^n \big(x_i,\lambda_i\big) 
\le \int_{0}^1 C_j(x) dx 
\leq \frac1j \bigg( x_1 + \sum_{n=1}^{j-1} \Mm_{i=0}^{n} \big(f_j \big(\tfrac{i}{j}\big),1\big)\bigg)\\
&\le \frac1j \bigg(x_1+\Hc[\vone]\M \sum_{n=0}^{j-1} f_j\big(\tfrac{n}{j}\big)\bigg)
 \le \frac{(1+\Hc[\vone]\M)x_1}j+\Hc[\vone]\M \sum_{n=1}^{N} \lambda_n x_n.
}
Finally, as $j \to \infty$, we get \eq{mu1F}.
\end{proof}

Now we turn to the proof of Lemma~\ref{lem:rearrangement}. Let us stress that in this lemma the assumptions for the mean $\M$ are more restrictive. More precisely, we assume $\M$ to be not only monotone but also symmetric. On the other hand, we need $\M$ to be $R$-weighted instead of $R^*$-weighted only. However, in view of Theorem~\ref{thm:fieldex}, this difference is rather a technical one.

\begin{proof}[Proof of Lemma~\ref{lem:rearrangement}] 
Throughout this proof, let us denote by $g^*$ the right continuous nonincreasing rearrangement of an $R$-simple function $g\colon D\to\R$. It is easy to observe that $g^*$ is again $R$-simple. 

Without the loss of generality, we may assume that the members of the sequence $\lambda$ are positive.
Consider a strictly increasing sequence $(\Psi_m)_{m=0}^M \in R^{M+1}$ such that $\Psi_0=0$, $\Psi_M=\Lambda_N$, 
$(\Lambda_n)_{n=0}^N$ is a subsequence of $(\Psi_m)_{m=0}^M$, and $f^*$ is constant on all intervals $[\Psi_{m-1},\Psi_m)$, where $m\in\{1,2,\dots,M\}$.

Set $\psi_m:=\Psi_m-\Psi_{m-1}$ and $y_m$ to be the value of $f^*$ on $[\Psi_{m-1},\Psi_m)$; $m \in \{1,\dots,M\}$. 
Furthermore, for every $n \in \{0,\dots,N\}$ there exists a unique $i_n\in\{0,\dots,M\}$ such that $\Psi_{i_n}=\Lambda_n$. As $\Psi_M=\Lambda_N$, we obtain $i_N=M$; furthermore, by $\Lambda_0=0=\Psi_0$, we get $i_0=0$. 
 
Obviously $(y_m)$ is nonincreasing, $\sum_{n=1}^N \lambda_n=\Lambda_N=\Psi_M=\sum_{m=1}^M \psi_M$, and 
\Eq{*}{
 \sum_{n=1}^N \lambda_nx_n=\int_0^{\Lambda_N} f(x)\:dx=\int_0^{\Psi_M} f(x)\:dx= \int_0^{\Psi_M} f^*(x)\:dx=\sum_{m=1}^M \psi_my_m.
}
Therefore the only property, which remains to be proved is \eq{Nonincreq}.
One can easily see that
 \Eq{*}{
(f\vert_{[0,u)})^*(x) \le f^*(x), \qquad x \in [0,u),\:u \in R \cap [0,\Lambda_N).
} 
Thus, by the monotonicity of $\M$,
\Eq{*}{
\Mm_0^u(f\vert_{[0,u)})^*(x)\:dx \le \Mm_0^u f^*(x)\:dx,\quad u \in R \cap [0,\Lambda_N).
} 
But, by the definition,  $(f\vert_{[0,u)})^*$ and $f\vert_{[0,u)}$ have the same distribution. Whence, applying the symmetry of $\M$, we arrive at
\Eq{*}{
\Mm_0^u f(x)\:dx \le \Mm_0^u f^*(x)\:dx,\qquad u \in R \cap [0,\Lambda_N).
} 
Therefore, applying this inequality for $u=\Lambda_n$, we obtain
\Eq{eq:1st}{
  \sum_{n=1}^N \lambda_n \Mm_{i=1}^n \big(x_i,\lambda_i\big)
  =\sum_{n=1}^N\lambda_n\Mm_0^{\Lambda_n} f(x)\:dx 
  \le \sum_{n=1}^N\lambda_n\Mm_0^{\Lambda_n} f^*(x)\:dx.
}
Now, let us notice that
\Eq{*}{
 \lambda_n=\Lambda_n-\Lambda_{n-1}=\Psi_{i_n}-\Psi_{i_{n-1}}=\sum_{m=i_{n-1}+1}^{i_n} (\Psi_m-\Psi_{m-1}).
}
Therefore, by Lemma~\ref{lem:decr}, definition of $(i_n)$, and the identity above, we obtain
\Eq{*}{
\sum_{n=1}^N \lambda_n \cdot \Mm_0^{\Lambda_n} f^*(x) dx
&=\sum_{n=1}^N \Big(\sum_{m=i_{n-1}+1}^{i_n} (\Psi_m-\Psi_{m-1})\Big)  \Mm_0^{\Psi_{i_n}} f^*(x) dx\\
&\le \sum_{n=1}^N \sum_{m=i_{n-1}+1}^{i_n} (\Psi_m-\Psi_{m-1}) \Mm_0^{\Psi_m} f^*(x) dx \\
&=\sum_{m=1}^M (\Psi_m-\Psi_{m-1}) \Mm_0^{\Psi_m} f^*(x) dx=\sum_{m=1}^M \psi_m \cdot \Mm_{i=1}^m (y_i,\psi_i).
}
But this inequality combined with \eq{eq:1st} is exactly what \eq{Nonincreq} states. As this was the only remaining property to be verified, the proof is complete.
\end{proof}


\end{document}